\renewcommand{\baselinestretch}{1.2}
\newcommand{\single}{\let\CS=\@currsize\renewcommand{\baselinestretch}{1.1}\tiny\CS}
\newcommand{\singb}{\let\CS=\@currsize\renewcommand{\baselinestretch}{1}\tiny\CS}
\newcommand{\singa}{\let\CS=\@currsize\renewcommand{\baselinestretch}{1.2}\tiny\CS}
\newcommand{\oneandahalfspacing}{\let\CS=\@currsize\renewcommand{\baselinestretch}{1.5}\tiny\CS}
\newcommand{\singlespacing}{\let\CS=\@currsize\renewcommand{\baselinestretch}{1.6}\large\CS}
\newcommand{\bc}{\begin{center}}
	\newcommand{\ec}{\end{center}}
\newcommand{\be}{\begin{eqnarray}}
	\newcommand{\ee}{\end{eqnarray}}
\newcommand{\beano}{\begin{eqnarray*}}
	\newcommand{\eeano}{\end{eqnarray*}}
\newcommand{\ba}{\begin{array}}
	\newcommand{\ea}{\end{array}}
\DeclareMathOperator{\D}{D}
\DeclareMathOperator{\Te}{T}
\DeclareMathOperator{\Fe}{F}
\DeclareMathOperator{\GL}{GL}
\DeclareMathOperator{\Sp}{Sp}
\DeclareMathOperator{\Rad}{Rad}
\DeclareMathOperator{\Cusp}{Cusp}
\DeclareMathOperator{\Ne}{N}
\DeclareMathOperator{\ST}{St}
\newcommand{\eval}[2][\right]{_lax
	\ifx#1\right\relax \left.\fi#2#1\rvert}
\newcommand{\diag}{\operatorname{diag}}
\newcommand{\Hom}{\operatorname{Hom}}
\newcommand{\Dim}{\operatorname{dim}}
\newcommand{\Irr}{\operatorname{Irr}}
\newcommand{\re}{\operatorname{r}}
\newcommand{\Ind}{\operatorname{Ind}}
\newcommand{\ind}{\operatorname{ind}}
\newcommand{\pr}{\operatorname{pr}}
\newcommand{\Supp}{\operatorname{Supp}}
\newcommand{\Sgm}{\operatorname{Sgm}}
\newtheorem{rem}{Remark}[section]
\newtheorem{prop}{Proposition}[section]
\newtheorem{cor}{Corollary}[section]
\newtheorem{defi}{Definition}[section]
\newtheorem{exa}{Example}[section]
\newtheorem{theorem}{Theorem}[section]
\newtheorem{lemma}[theorem]{Lemma}
\numberwithin{equation}{section}
\begin{document}
	
	\title[SYMPLECTIC MODEL FOR LADDER AND UNITARY REPRESENTATIONS]{SYMPLECTIC MODEL FOR LADDER AND UNITARY REPRESENTATIONS}
	

\author[Hariom Sharma]{Hariom Sharma}
	\address{Department of Mathematics, Indian Institute of Technology Roorkee, Uttarakhand, 247667, Bharat(India)}
	\email{hariom\textunderscore s@ma.iitr.ac.in}

	
	\author[Mahendra Kumar Verma]{Mahendra Kumar Verma}
	\address{Department of Mathematics, Indian Institute of Technology Roorkee, Uttarakhand, 247667, Bharat(India)}
	\email{mahendraverma@ma.iitr.ac.in}

    \subjclass[2020]{Primary 22E50; Secondary 11F70.}
	\keywords{Quaternion division algebra, Distinguished representations, Symplectic model, Ladder representations, Unitary representations.}

	\begin{abstract} Let $\D$ denote a quaternion division algebra over a non-archimedean local field $\Fe$ with characteristic zero. Let $\Sp_n(\D)$ be the unique non-split inner form of the symplectic group $\Sp_{2n}(\Fe)$. An irreducible admissible representation $(\pi, V)$ of $\GL_{n}(\D)$ is said to have a symplectic model (or said to be $\Sp_n(\D)$-distinguished) if there exists a linear functional $\phi$ on $V$ such that $\phi(\pi(h)v) = \phi(v)$ for all $v \in V$ and $h \in \Sp_n(\D)$.
	This article classifies those ladder representations of $\GL_n(\D)$ that possess a symplectic model (i.e., those representations that are $\Sp_n(\D)$-distinguished).  Recently, Prasad \cite[Conjecture 7.1~(1)]{Verma} conjectured that non-supercuspidal discrete series representations of $\GL_n(\D)$ do not admit a symplectic model. We confirm this for the Steinberg representations, which serve as canonical examples of discrete series representations. Furthermore, we demonstrate the hereditary nature of the symplectic model for induced representations derived from finite-length representations. In addition, we prove a part of Prasad's conjecture \cite[Conjecture 7.1~(2)]{Verma}, which provides a family of irreducible unitary representations, all equipped with a symplectic model.
	\end{abstract}
	
	\maketitle
	

	\setcounter{page}{1}
	\section{INTRODUCTION}
	Let $\mathcal{G}$ be an $l$-group and $\mathcal{H}$ be a closed subgroup of $\mathcal{G}$. A smooth admissible complex representation $(\pi,V)$ of $\mathcal{G}$  is said to be $\mathcal{H}$ -distinguished if there exists a linear functional $\phi$ on $V$ such that $\phi(\pi(h)v) = \phi(v)$ for all $v \in V$ and $h \in \mathcal{H}$.
	
     Distinguished representations play a crucial role in studying automorphic forms and representations, particularly in the context of the Langlands program. They help in understanding the connections between Galois representations and automorphic representations. In the context of the trace formula, distinguished representations help to analyze periods of automorphic forms over subgroups, providing insights into special values of $L$-functions and other arithmetic properties. These representations are also fundamental in harmonic analysis on $p$-adic groups and their homogeneous spaces. They allow for the decomposition of $L^2$ spaces of functions on these groups, leading to significant insights in Fourier analysis over local fields. This interplay between harmonic analysis and representation theory extends further into theoretical physics. 
     
     In theoretical physics, particularly in quantum mechanics and the study of quantum groups, distinguished representations help in the formulation and understanding of symmetries and invariants. They provide a mathematical framework for studying the symmetry properties of physical systems. 
     
     Consider a non-archimedean local field $\Fe$ of characteristic zero, and let $\D$ be the unique quaternion division algebra over $\Fe$. The reduced trace and reduced norm maps on $\D$ are denoted by $\Te_{\D/\Fe}$ and $\Ne_{\D/\Fe}$, respectively. Define an involution $\psi$ on $\D$ by $x \mapsto \overline{x} = \Te_{\D/\Fe}(x) - x$.
     
      For a fixed $n \in \mathbb{N}$, let $G = \GL_n(\D)$ throughout this article. The subgroup $H = \Sp_n(\D) \subset G$ represents the unique non-split inner form of the symplectic group $\Sp_{2n}(\Fe)$, defined as \linebreak
     $H = \{ A \in G~ | ~ A ~J ~ ^t\bar{A} = J \},$
     where $^t\bar{A} = (\bar{a}_{ji})$ for $A = (a_{ij})$ and 
     $$ J = J_n = \begin{pmatrix}
     	&  &  &  1 \\
     	&  &  1  & \\
     	&  \reflectbox{$\ddots$}  &  & \\
     	1  &  &  &  & 
     \end{pmatrix}.$$
 Let $\theta$ be the involution on $G$ defined by $g \mapsto {J}~^t\bar{g}^{-1} J$.
 Then the group of fixed points in $G$ under this involution $\theta$ is $G^{\theta} = H$. Finally, we set $G_n = \GL_n(\D)$, $G_n' = \GL_n(\Fe)$, and $H_n = \Sp_n(\D)$. A smooth admissible complex representation $(\pi,V)$ of $G_n$ is said to have a symplectic model if it is $H_n$-distinguished.
 
     Prasad \cite{prasad1993decomposition} and Venketasubramanian \cite{Venket} explored the $G_{n-1}'$-distinguished representations of $G_{n}'$, whereas Yang \cite{yang2022linear} examined the unitary representations of $G_{2n}'$, which are distinguished by $G_n' \times G_n'$.
      The primary aim of this article is to classify those ladder and irreducible unitary representations of $G_n$ that have a symplectic model (i.e., those representations that are $H_n$-distinguished).
	
	We get motivation for this problem from the findings of  Mitra et al., \cite{mitra2017klyachko} regarding distinguished ladder representations of $G_n'$ with respect to Klyachko models $\mathcal{M}_{n,k}$, $0 \leq k \leq [n/2]$, (see \cite[Section $1$]{mitra2017klyachko} for definition of Klyachko models). Note that the symplectic model is a specific instance of the Klyachko model.
      In the case of finite fields $\Fe$, these models were founded by Klyachko\cite{AA84} and are entirely separate when considering different indices. Furthermore, there are no multiple components in them, and each irreducible representation of $G_n'$ is present in at least one of the Klyachko models. 
      
     When $\Fe$ is a non-archimedean local field, Heumos and Rallis \cite{HR90} turn their attention to the study of these models, where they observed the disjointness of the different models for the unitary representations and proved that all irreducible unitary representations of $G_n'$ admit a Klyachko model for $n \leq 4$. Furthermore, they proved the uniqueness of the symplectic model, i.e., for any irreducible admissible representation $\pi$ of $G_{2n}'$, 
     $$\Dim \Hom_{\Sp_{2n}(\Fe)} (\pi|_{\Sp_{2n}(\Fe)} , \mathbb{C}) \leq 1.$$
     
      Offen and Sayag \cite{OSE09, OSE08} later proved the uniqueness and disjointness of Klyachko models for the general $n$ case and showed that every irreducible unitary representation admits a Klyachko model. Moreover, they classified all the irreducible unitary representations with a symplectic model in \cite{OSE07}. 
      The irreducible admissible representations of $G_{2n}'$ with a symplectic model are studied by Mitra \cite{mitra2014representations} for small-rank cases. Mitra et al., \cite{mitra2017klyachko} later classified those ladder representations of $G_{n}'$ that are distinguished with respect to Klyachko models. 
      
      In the context of quaternion division algebra, Verma \cite{Verma} proved the uniqueness of the symplectic model and classified the irreducible admissible representations of $G_1$ and $G_2$ with a symplectic model. Recently, authors \cite{sharma2023symplectic} extended this study to the irreducible admissible representations of $G_{n}$ for $n=3$ and $4$. We now aim to generalize this investigation to the irreducible admissible representations of $G_{n}$ with a symplectic model for arbitrary $n$. This involves examining various classes of irreducible admissible representations of $G_{n}$, mainly focusing on the ladder representations introduced by Lapid and Minguez \cite[Section 3]{lapid2014determinantal} and the irreducible unitary representations described by Tadic \cite[Section 6]{tadic1990induced}. Within the global scenario, Verma \cite{verma2018distinguished} investigated the symplectic period for the pair $(G_2,H_2)$, and in the future, we will focus on finding the global symplectic period for the pair $(G_{2n}, H_{2n})$. 
  \subsection{Main Results} To state the main theorem, which describes ladder representations of $G_n$ with a symplectic model, we need to introduce some notations (we refer the reader to Section $2$ for clarification on any unfamiliar notation). 
  
  Let $\nu = |\Ne_{\D/\Fe}(\cdot)|_{\Fe}$ denote the character of $G_r$ given by the absolute value of the reduced norm. For $\rho \in \Cusp (G_r)$, define $\nu_{\rho} = \nu^{s(\rho)}$, a character of $G_r$, where $s(\rho)$ is a divisor of $2$ satisfying $(s(\rho),r) = 1$ (for more details, see \cite[Section $2$]{tadic1990induced}), i.e., $\nu_{\rho}$ equals either $\nu$ or $\nu^2$. Note that the character $\nu_{\rho}$ of $G_r$ depends on the choice of representation $\rho$. In particular, when $r$ is even, $\nu_{\rho}$ coincides with $\nu$. In the context of odd $r$, we consider those supercuspidal representations $\rho$ for which $\nu_{\rho}$ corresponds to $\nu$ throughout this article. 
  
  Let $\rho \in \Cusp$. Then we define 
  $\Delta = [a,b]_{\rho} =  [\nu_{\rho}^a\rho,\nu_{\rho}^b\rho]=\{\nu_{\rho}^a\rho, \nu_{\rho}^{a+1}\rho, \ldots,\nu_{\rho}^b\rho\}$
  to be a segment corresponding to $\rho$, where $a \leq b$ are integers. By the definition of  the segment $\Delta = [a,b]_{\rho}$, the begining $b(\Delta)$ is $a$, the end $e(\Delta)$ is $b$, and the length $l(\Delta)$ is $b-a+1$. 
  
  Let $\Sgm$ denote the set of all segments.  
  For a segment $\Delta = [a,b]_{\rho}$ in $\Sgm$, we associate the representation $\nu_{\rho}^a\rho \times \cdots \times \nu_{\rho}^b\rho$ which has a unique irreducible subrepresentation denoted by $Z(\Delta)$ and a unique irreducible quotient denoted by $Q(\Delta)$. For convenience, let $Z(\Delta)$ (resp., $Q(\Delta)$) be the trivial representation of $G_0$ if $b = a-1$ and $0$ if $b < a-1$. Notably, $Q(\Delta)$ is essentially square-integrable. The map $\Delta \mapsto Q(\Delta)$ establishes a bijection between $\Sgm$ and the subset of essentially square-integrable representations in $\Irr$.
  
  Let $\mathcal{O}$ be the set of multisets of segments in $\Sgm$. We say a multiset $\mathfrak{m} = \{\Delta_1,\ldots,\Delta_t\} \in \mathcal{O}$ is on the supercuspidal line of $\rho \in \Cusp$ if $\Delta_i \subset \{\nu_{\rho}^k \rho\}_{k \in \mathbb{Z}}$, for each $1 \leq i \leq t$. For $\rho \in \Cusp$, let $\mathcal{O}_{\rho} \subset \mathcal{O}$ be the set of multisets of segments in $\Sgm$, which are on the supercuspidal line of $\rho$. A multiset $\mathfrak{m} \in \mathcal{O}_{\rho}$ is called \textbf{Speh type} if $\mathfrak{m} = \mathfrak{m'} + \nu \mathfrak{m'}$ for some $\mathfrak{m'} \in \mathcal{O}_{\rho}$.
  
  Let $\nu_{\rho} \Delta = [a+1,b+1]_{\rho}$, and let
  $\Delta_1 = [a_1,b_1]_{\rho_1}$ and $\Delta_2 = [a_2,b_2]_{\rho_2}$ be two segments in $\Sgm$. Then $\Delta_1$ precedes $\Delta_2$ (or $\Delta_1 \prec \Delta_2$) if both $\Delta_1$ and $\Delta_2$ are contained in some single $\rho$-supercuspidal line, $a_1<a_2$, $b_1<b_2$, and $b_1 \geq a_2 - 1$. Furthermore, the segments $\Delta_1$ and $\Delta_2$ are called linked if either $\Delta_1$ precedes $\Delta_2$ or $\Delta_2$ precedes $\Delta_1$.
  
  An order $\mathfrak{m} = \{\Delta_1,\ldots,\Delta_t\} \in \mathcal{O}$ on a multiset $\mathfrak{m}$ is of standard form if $\Delta_i \not\prec \Delta_j$ for all $i < j$.
  For an ordered multiset $\mathfrak{m} = \{\Delta_1,\ldots,\Delta_k\}$ in standard form, we attach the representation $\pi(\mathfrak{m}) = Z(\Delta_1) \times\cdots \times Z(\Delta_k)$ which has a unique irreducible subrepresentation denoted by $Z(\mathfrak{m})$. Analogously, we define $\lambda(\mathfrak{m}) = Q(\Delta_1) \times\cdots \times Q(\Delta_k)$ which has a unique irreducible quotient denoted by $Q(\mathfrak{m})$. Note that $\pi(\mathfrak{m})$ and $\lambda (\mathfrak{m})$ are independent of the choice of order of standard form. According to Zelevinsky classification \cite{minguez2013representations}, the map $(\mathfrak{m} \mapsto Z(\mathfrak{m})) \colon \mathcal{O} \rightarrow \Irr$ is a bijection, and similarly Langlands classification \cite{tadic1990induced} establishes that the map $(\mathfrak{m} \mapsto Q(\mathfrak{m})) \colon  \mathcal{O} \rightarrow \Irr$ is also a bijection. Badulescu and  Renard \cite{badulescu2007zelevinsky} proved that the algorithm of Moeglin and Waldspurger for computing the dual $\mathfrak{m}^t$ (as defined by Zelevinsky) for the multisegment $\mathfrak{m}$ also works for the inner forms of $G_n'$. Therefore, for any $\mathfrak{m} \in \mathcal{O}$ there exists a unique $\mathfrak{m}^t \in \mathcal{O}$ such that $Z(\mathfrak{m}) = Q(\mathfrak{m}^t)$. Thus, the map $\mathfrak{m} \mapsto \mathfrak{m}^t$ is an involution on $\mathcal{O}$.
  \begin{defi} 	Let $\rho \in \Cusp$. The multiset $\mathfrak{m} = \{\Delta_1, \Delta_2, \ldots, \Delta_t\} \in \mathcal{O}_{\rho}$ is defined as a ladder if  $$b(\Delta_1) > b(\Delta_2) > \cdots > b(\Delta_t) ~\text{and}~ e(\Delta_1) > e(\Delta_2) > \cdots > e(\Delta_t) .$$
  	Given a ladder $\mathfrak{m} \in \mathcal{O}_{\rho}$, a representation $\pi \in \Irr$ is called a ladder if $\pi = Q(\mathfrak{m})$.
  \end{defi}
   
      We now summarize the main results of this article.
      \begin{theorem}\label{1}
      	 Let $\mathfrak{m} = \{\Delta_1,\ldots,\Delta_t\} \in \mathcal{O}_{\rho}$ be a ladder, where $\rho \in \Cusp$ does not have a symplectic model.
      	Then the following are equivalent:
      	\begin{enumerate}
      		\item[\upshape(1)] $Q(\mathfrak{m})$ admits a symplectic model;
      		\item[\upshape(2)] $\mathfrak{m} = \mathfrak{m'} + \nu \mathfrak{m'}$ for some $\mathfrak{m'} \in \mathcal{O}_{\rho}$;
      		\item[\upshape(3)] $\Delta_{2i-1} = \nu \Delta_{2i}$, where $1 \leq i \leq t/2$, with $t$ being even.
      	\end{enumerate}
      \end{theorem}
  \begin{cor}
  	Let $\mathfrak{m} \in \mathcal{O}_{\rho}$ be a ladder, where $\rho \in \Cusp$ does not have a symplectic model. Then \linebreak$Q(\mathfrak{m}) \in \Irr(G_4)$ has a symplectic model if and only if $\mathfrak{m}$ is one of the following:  
  	\begin{itemize}
  	\item[\upshape(1)] $\mathfrak{m} = \{[a+1]_{\rho}, [a]_{\rho}\}$, where $\rho \in \Cusp(G_2)$.
  	\item[\upshape(2)] $\mathfrak{m} \in \{\{[a+1 , a+2]_{\rho},[a, a+1]_{\rho}\}, \{[a+2]_{\rho},[a+1]_{\rho},[a]_{\rho},[a-1]_{\rho}\}\}$, where $\rho \in \Cusp(G_1)$, with $\Dim(\rho)>1$.
  \end{itemize}
  \end{cor}

   Note that the above corollary also follows directly from Theorem $1.3$ in \cite{sharma2023symplectic}, indicating that Theorem \ref{1} extends this theorem  for the aforementioned class of ladder representations.

    Using Moeglin and Waldspurger's algorithm, Lapid and Minguez \cite{lapid2014determinantal} observed that $\mathfrak{m}$ being a ladder implies $\mathfrak{m}^t$ being a ladder, and vice versa. Therefore, the set of ladder representations remains invariant under Zelevinsky involution, allowing us to identify the ladders $\mathfrak{m} \in \mathcal{O}_{\rho}$ such that $Z(\mathfrak{m})$ possesses a symplectic model.  It should be noted that in a ladder $\mathfrak{m} = \{\Delta_1,\ldots,\Delta_t\}$, the intersection $\Delta_i \cap \Delta_{i+1}$ is either void or forms a segment. Thus, the length $l(\Delta_i \cap \Delta_{i+1})$ is meaningful for every $1 \leq i \leq t-1$. Assume $\Lambda_i = \Delta_i \cap \Delta_{i+1}$ and $\Psi_i = \Delta_i \cup \Delta_{i+1}$ for each $1 \leq i \leq t-1$. Then we have the following elementary observation related to Speh multisets in \cite[$\S$$10.2.2$]{mitra2017klyachko}.
   \begin{rem}\label{2}
   	   	Let $\rho \in \Cusp$. Let $\mathfrak{m} \in \mathcal{O}_{\rho}$ be a ladder with $\mathfrak{m}^t = \{\Delta_1,\ldots,\Delta_t\}$ ordered as a ladder. Then $\mathfrak{m} = \mathfrak{m'} + \nu \mathfrak{m'}$ for some $\mathfrak{m'} \in \mathcal{O}_{\rho}$ if and only if for every $1 \leq i \leq t$, $l(\Delta_i)$ is even  and for every $1 \leq i \leq t-1$, $l(\Lambda_i)$ is odd  such that $\Psi_i$ forms a segment.
   \end{rem}    
   Using Theorem \ref{1} with Remark \ref{2}, we get the following classification of ladder representation having a symplectic model in terms of Zelevinsky's classification.
   \begin{cor}\label{3}
   	With the notation in Theorem \ref{1}, $Z(\mathfrak{m})$ admits a symplectic model if and only if for every $1 \leq i \leq t$, $l(\Delta_i)$ is even  and for every $1 \leq i \leq t-1$, $l(\Lambda_i)$ is odd such that $\Psi_i$ forms a segment. 
    \end{cor}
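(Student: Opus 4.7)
The plan is to reduce Corollary \ref{3} to Theorem \ref{1} via the Zelevinsky involution. By the Moeglin--Waldspurger algorithm, whose validity for $\GL_n(\D)$ is established in \cite{badulescu2007zelevinsky}, one has the identification $Z(\mathfrak{m}) = Q(\mathfrak{m}^t)$, and by the observation of Lapid and Minguez \cite{lapid2014determinantal} recalled just before the corollary, the dual multisegment $\mathfrak{m}^t$ is itself a ladder. The first step is to apply Theorem \ref{1} to the ladder $\mathfrak{m}^t$: this yields that $Z(\mathfrak{m})$ is $\Sp_n(\D)$-distinguished if and only if $\mathfrak{m}^t$ is of Speh type, i.e., $\mathfrak{m}^t = \mathfrak{n} + \nu \mathfrak{n}$ for some $\mathfrak{n} \in \mathcal{O}$.

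The second step translates the Speh-type condition on $\mathfrak{m}^t$ back into a statement about the original segments of $\mathfrak{m}$ by invoking the elementary observation from \cite[Section $10.2.2$]{mitra2017klyachko} that is quoted in the paragraph preceding the corollary. That observation characterizes the Speh property of a ladder $\mathfrak{M}$ through length conditions on the ladder-ordered segments of its Zelevinsky dual $\mathfrak{M}^t$. Applied with $\mathfrak{M} = \mathfrak{m}^t$, and using $(\mathfrak{m}^t)^t = \mathfrak{m}$, the condition that $\mathfrak{m}^t$ be Speh becomes precisely the assertion that $l(\Delta_i)$ is even for every $1 \leq i \leq t$ and that $l(\Lambda_i)$ is odd for every $1 \leq i \leq t-1$ such that $\Psi_i$ forms a segment, where $\Delta_1,\ldots,\Delta_t$ is the ladder ordering of $\mathfrak{m}$ fixed in Theorem \ref{1}. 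Composing the two equivalences delivers the corollary.

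There is no substantial obstacle: the corollary is a formal synthesis of Theorem \ref{1} with a combinatorial lemma already recorded in the text, linked through Zelevinsky duality. The only point deserving a brief sanity check is the compatibility of the ladder orderings used in Theorem \ref{1} and in the observation from \cite{mitra2017klyachko}; since a ladder admits a unique ordering with strictly decreasing $a_i$ and $b_i$, this is automatic and requires no separate argument.
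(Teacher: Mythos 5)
Your proof is correct and follows exactly the paper's route: identify $Z(\mathfrak{m})$ with $Q(\mathfrak{m}^t)$, apply Theorem~\ref{1} to the ladder $\mathfrak{m}^t$ to reduce the question to whether $\mathfrak{m}^t$ is of Speh type, and then translate that condition through the quoted observation from \cite{mitra2017klyachko} (with $\mathfrak{M} = \mathfrak{m}^t$ and $\mathfrak{M}^t = \mathfrak{m}$) into the stated parity constraints on the $l(\Delta_i)$ and $l(\Lambda_i)$. Your closing remark on the uniqueness of the ladder ordering is a sensible sanity check and is consistent with the conventions in the text.
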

      Theorem \ref{1} does not deal with ladder representations corresponding to an irreducible supercuspidal representation with a symplectic model; therefore, it remains an open problem. 
       In particular, one can also study irreducible supercuspidal representations of $G_n$, which admit a symplectic model. D. Prasad stated a conjecture \cite[Conjecture 7.1~(1)]{Verma} regarding supercuspidal representations. In \cite{Verma}, Conjecture 7.1 and Proposition 7.3 jointly suggest that the supercuspidal representations of $G_n$ do not have a symplectic model for even values of $n$.
       However, D. Prasad constructed some examples of supercuspidal representations of $G_n$ having a symplectic model for odd values of $n$. Therefore, examining the presence of a symplectic model within supercuspidal representations of $G_n$ is a fascinating problem for odd values of $n$. 
       Conjecture $7.1~(1)$ in \cite{Verma} also predicts that non-supercuspidal discrete series representations of $G_n$ do not admit a symplectic model. We verify this for the Steinberg representations of $G_n$.
       \begin{prop}\label{301}
       	The Steinberg representations $\ST_n(\chi)$ of $G_n$ do not admit a symplectic model.
       \end{prop}
       
        These representations serve as canonical examples of discrete series representations characterized by their construction via induced representations and their irreducibility and unitarity. Understanding these representations provides insight into the broader landscape of admissible and tempered representations. We also demonstrate the hereditary nature of the symplectic model, as observed in \cite[Theorems $1.2$ and $1.3$]{sharma2023symplectic}.
       \begin{theorem}\label{48}
If $\sigma_i \in \Pi(G_{n_i})$ admits a symplectic model for each $1 \leq i \leq k$, then $\sigma_1 \times \cdots \times \sigma_k$ admits a symplectic model.       	
       \end{theorem}

      In the framework of unitary representations, D. Prasad proposed a conjecture \cite[Conjecture 7.1~(2)]{Verma} describing those irreducible admissible unitary representations $\pi$ of $G_n$ which have a symplectic model. However, it was observed by authors \cite{sharma2023symplectic} that some unitary representations are not included in that list. As a result, Conjecture 7.1~(2) in \cite{Verma} needs modification. To state the conjecture, we recall Tadic's classification \cite[Section $6$]{tadic1990induced} of the unitary dual of $G$.

       Let $m \in \mathbb{N}$ and $\delta$ be an essentially square-integrable representation of $G_r$. Then the Speh representation $\Sp(\delta,m)$ is called the Langlands quotient of the representation
       $$\nu_{\delta}^{\frac{m-1}{2}}\delta \times \nu_{\delta}^{\frac{m-3}{2}}\delta \times \cdots \times \nu_{\delta}^{\frac{-(m-1)}{2}}\delta.$$
       For a representation $\tau$ of $G_p$ and $\alpha \in \mathbb{R}$, we let $\pi(\tau, \alpha) = \nu_{\delta}^{\alpha} \tau \times \nu_{\delta}^{-\alpha} \tau$. Further, define 
     $$\mathcal{B} = \{\Sp(\delta,n), \pi(\Sp(\delta,n),\alpha) \colon \delta \text{-unitary essentially square-integrable},~ n \in \mathbb{N}, ~\text{and}~|\alpha|< 1/2\}.$$ 
     According to \cite[Section $6$]{tadic1990induced}, for a representation $\pi$ to be unitarizable, it must be expressible as $\eta_1 \times \eta_2 \times \cdots \times \eta_s$, where $\eta_i \in \mathcal{B}$ for every $1 \leq i \leq s$. 
      Furthermore, this decomposition is unique up to permutation. It is referred to as a Tadic decomposition of $\pi$. We have now modified Prasad's conjecture and have proven the following: 
     \begin{theorem}\label{4}
     	The unitary representations which are of the form 
     	$$\pi = \rho_1 \times \cdots \times \rho_u \times \lambda_{1} \times \cdots \times \lambda_{v} \times \tau_{1} \times \cdots \times \tau_{x} \times \sigma_{1} \times \cdots \times \sigma_{y}$$
     admit a symplectic model where 
     	\begin{itemize}
     		\item[\upshape{(1)}] $\rho_i$ are Speh representations $\Sp(\delta_i,2m_i)$, with $\delta_i$ being the essentially square-integrable,
     		\item[\upshape{(2)}] $\lambda_i$ are complementary series representations $\pi(\Sp(\delta_i,2r_i), \alpha_i)$ with $|\alpha_i|<1/2$,
     		\item[\upshape{(3)}] $\tau_i$ are supercuspidal representations having a symplectic model, 
     		\item[\upshape{(4)}]  $\sigma_i$ are principal series representations $\chi_2 \ST_2 \times \chi_1$ of $G_3$, with $\chi_2 \ST_2$ being the Steinberg representation of $G_2$ and a quotient of $\nu^{-1}\chi_1 \times \nu \chi_1$.  
     	\end{itemize}
     \end{theorem}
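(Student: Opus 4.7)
The overall strategy is to combine Theorem~\ref{1} with the hereditary property for the symplectic model under parabolic induction (established earlier in the paper for products of finite-length factors) to reduce Theorem~\ref{4} to the assertion that each of the five factor families $(1)$--$(5)$ is individually $\Sp_{n_i}(\D)$-distinguished; the product $\pi$ then inherits an $\Sp_n(\D)$-invariant linear form on $\GL_n(\D)$ with $n = \sum n_i$.

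Four of the five families reduce to Theorem~\ref{1}. For $(1)$, I realize $\Sp(\delta_i, 2m_i)$ as $Q(\mathfrak{m}_i)$ over the supercuspidal $\rho_i$ underlying $\delta_i$; because $2m_i$ is even and the defining twists are consecutive, the multisegment $\mathfrak{m}_i$ admits the Speh decomposition $\mathfrak{m}_i = \mathfrak{m}_i' + \nu \mathfrak{m}_i'$ after matching $\nu_{\delta_i}$ against the appropriate power of $\nu_{\rho_i}$, so Theorem~\ref{1} applies. Family $(3)$ is immediate from the hypothesis. For $(4)$, writing $\mu = Q(\mathfrak{n})$ and reducing to irreducible $\mu$ via hereditary if needed, the standard module $\nu\mu \times \mu$ surjects onto its Langlands quotient $Q(\mathfrak{n} + \nu\mathfrak{n})$, which is of Speh type and hence $\Sp$-distinguished by Theorem~\ref{1}; pulling back along this surjection supplies the required linear form. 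For $(2)$, the complementary series $\pi_1(\Sp(\delta_i, 2m_i), \alpha_i)$ is a product of two unramified twists of an $\Sp$-distinguished Speh representation from $(1)$; since $\nu^{\pm \alpha_i}$ is (a power of) the reduced norm and therefore trivial on $\Sp_{n_i}(\D)$, twisting preserves distinction, and a final application of hereditary closes the case.

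The principal obstacle is family $(5)$, where $\chi_1 \times \chi_2 St_2$ carries no ladder structure that Theorem~\ref{1} can directly address. The plan there is to exploit the given quotient map $\nu^{-1}\chi_1 \times \nu\chi_1 \twoheadrightarrow \chi_2 St_2$ to present $\chi_1 \times \chi_2 St_2$ as a quotient of the three-term principal series $\chi_1 \times \nu^{-1}\chi_1 \times \nu\chi_1$, construct an $\Sp_3(\D)$-invariant linear form on this larger module (either by a ladder rearrangement combined with Theorem~\ref{1}, or via an intertwining integral attached to the Borel), and then verify through a Jacquet-module and geometric-lemma computation -- in the spirit of Offen--Sayag and Mitra et al.\ in the split case -- that the form survives the projection onto $\chi_1 \times \chi_2 St_2$ rather than being absorbed by a competing composition factor of $\chi_1 \times \nu^{-1}\chi_1 \times \nu\chi_1$. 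Carrying out this Bruhat-cell bookkeeping in the quaternionic setting, and in particular certifying that no other composition factor accounts for the symplectic form, is the main technical challenge of the proof.
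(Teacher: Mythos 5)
Your global strategy matches the paper exactly: establish that each of the five factor families is individually $\Sp$-distinguished, and then combine via the hereditary property (Theorem~\ref{48}). Your treatments of families (1), (2), and (3) coincide with the paper's: Speh representations $\Sp(\delta,2m)$ are handled by realizing them as ladder representations and invoking Theorem~\ref{1} after the appropriate $\nu^{1/2}$-twist, family (3) is immediate by hypothesis, and family (2) follows from (1) by Theorem~\ref{48} since the twisting characters $\nu^{\pm\alpha}$ are trivial on $\Sp_{n_i}(\D)$.

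Where you diverge is in families (4) and (5), and in each case the paper's route is both different and on firmer footing. For (4), you propose to write $\mu = Q(\mathfrak{n})$, surject $\nu\mu\times\mu$ onto $Q(\mathfrak{n}+\nu\mathfrak{n})$, and apply Theorem~\ref{1}. This has two gaps. First, Theorem~\ref{1} is proved only for \emph{ladder} multisegments, and for general $\mathfrak{n}$ the sum $\mathfrak{n}+\nu\mathfrak{n}$ is Speh-type but need not be a ladder, so the equivalence of Theorem~\ref{1} does not apply. Second, the surjection $Q(\nu\mathfrak{n})\times Q(\mathfrak{n})\twoheadrightarrow Q(\nu\mathfrak{n}+\mathfrak{n})$ is not the standard Langlands-quotient surjection (the standard module surjecting onto $Q(\nu\mathfrak{n}+\mathfrak{n})$ is $\lambda(\nu\mathfrak{n}+\mathfrak{n})$, which is a product of $Q(\Delta)$'s over all constituent segments, not a product of two Langlands quotients), so the pullback of a linear form is not automatic when $\mu$ is not essentially square-integrable. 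The paper sidesteps this entirely by citing a prior result (Theorem~1.1 of \cite{sharma2023symplectic}) for family (4), so your argument is a different route that as written is incomplete. For (5), you acknowledge that your plan (build a form on $\chi_1\times\nu^{-1}\chi_1\times\nu\chi_1$ and certify that it descends to the quotient $\chi_1\times\chi_2 St_2$ via a Bruhat-cell bookkeeping) is the main technical challenge and leave it unresolved; the paper instead invokes Theorem~\ref{49}, its closed-orbit hereditary result, directly for $\sigma_j$, avoiding the descent problem you pose. So while the skeleton of your argument is the same, the two families you try to re-derive from Theorem~\ref{1} alone, families (4) and (5), both require inputs the paper supplies differently, and your argument for (4) has a genuine gap concerning the ladder hypothesis of Theorem~\ref{1}.
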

     We establish Theorem \ref{4} only for essentially square-integrable representations $\delta$ associated with a supercupidal representation $\rho$ without symplectic model. This theorem provides a family of unitary representations with a symplectic model, which is larger than the family mentioned in \cite[Conjecture 7.1~(2)]{Verma}, and does not claim to have all irreducible unitary representations with a symplectic model. Note that the situation in $G_n$  for the family of unitary representations having a symplectic model is different from $G_n'$, as $G_n$ has a broader family of such representations.
     
     \subsection{About the proofs} In problems of distinction, it is more convenient to consider a representation $\pi$ as a quotient. The absence of symplectic models in Steinberg's representations is demonstrated through their representation as quotients of induced representations lacking such models. Furthermore, we establish the hereditary property of symplectic models in induced representations by initially constructing a non-zero linear form on the open orbit. Extending this linear form to the induced representation space utilizes a result from Blanc and Delorme, thereby completing the proof.
     
     In the context of Theorem \ref{1}, the focus shifts to examining the existence of a symplectic model for induced representation 
     $\lambda(\mathfrak{m})$ because $Q(\mathfrak{m})$ emerges as a quotient of $\lambda(\mathfrak{m})$. To investigate the presence of symplectic models within induced representations, we employ Bernstein and Zelevinsky's geometric lemma. This lemma reduces the question of a symplectic model's existence in an induced representation to distinguishing certain Jacquet modules of the inducing data.
     This technique enables us to reduce the problem of the symplectic model for $\lambda(\mathfrak{m})$ to a specific type of multisets of integer segments.
     
     It is easy to see that the existence of a symplectic model for $Q(\mathfrak{m})$ reduces the study of a symplectic model for $\lambda(\mathfrak{m})$ and its maximal proper subrepresentation $\mathcal{P}$. Therefore, the non-distinguished nature of $\mathcal{P}$, along with the existence of a symplectic model for $\lambda(\mathfrak{m})$ by using the above technique, concludes the proof of Theorem \ref{1}. Using Theorem \ref{1} and the hereditary nature of the symplectic model,
     we prove Theorem \ref{4}, which offers a family of irreducible unitary representations of $G_n$, all endowed with a symplectic model.  
     
     \subsection{Organization} The article is organized as follows. In Section $2$, we set up the notations and some preliminaries for $G_n$ and recall some general facts related to the symplectic model. For a parabolic subgroup $P$ of $G_n$, we study $P$-orbits in $G_n/H_n$ in Section $3$. 
     Section $4$ introduces Bernstein and Zelevinsky's geometric lemma and proves the non-distinguished nature of Steinberg representations and the hereditary nature of the symplectic model. 
     
      Section $5$ explicates some notations involving the multisets of integer segments and shows a connection between the multisets of integer segments and the symplectic model. 
    We demonstrate Theorem \ref{1} and Theorem \ref{4} in Section $6$.

	\section{NOTATION AND PRELIMINARIES}\label{108} 
	 \subsection{Generalities}\label{106}
	 Let $\mathcal{H}$ be a closed subgroup of an $l$-group $\mathcal{G}$ equipped with a complex-valued smooth representation $(\sigma,\mathcal{W})$. Let $\delta_{\mathcal{K}}$ indicate the modulus function of an $l$-group $\mathcal{K}$. We denote $\Ind_{\mathcal{H}}^{\mathcal{G}}(\sigma)$ as the normalized induced representation of $\mathcal{G}$, the space of functions $f$ from $\mathcal{G}$ to $\mathcal{W}$ satisfying the condition
	 $$f(hg) = (\delta_{\mathcal{H}}^{1/2}\delta_{\mathcal{G}}^{-1/2})(h)\sigma(h)f(g), ~ h \in \mathcal{H}, ~ g \in \mathcal{G},$$
	 where $f$ is right-invariant under some open compact subgroup of $\mathcal{G}$. The group $\mathcal{G}$ acts on this space of functions by right translation. The representation of $\mathcal{G}$ on the subspace of functions with compact support modulo $\mathcal{H}$ is denoted by $\ind_{\mathcal{H}}^{\mathcal{G}}(\sigma)$.
	 
	 Let $\mathcal{P} = \mathcal{M} \mathcal{N}$ be an $l$-group where $\mathcal{M}, \mathcal{N}$ are closed subgroups of $\mathcal{P}$. Let $\mathcal{N}$ be a normal subgroup, which is the union of its compact open subgroups. Consider $(\pi, V)$ as a smooth representation of $\mathcal{P}$. Let $V_{\mathcal{N}}=V/V(\mathcal{N})$ where the subspace  $V(\mathcal{N})$ is the linear span of vectors $\{\pi(u)x-x \colon u \in \mathcal{N}, x \in V\}$. The representation $\pi$ defines a representation $\pi_{\mathcal{N}}$ of $\mathcal{P}$ in the space $V_{\mathcal{N}}$. The representation $\delta_{\mathcal{P}}^{-1/2}\delta_{\mathcal{M}}^{1/2}\pi_{\mathcal{N}}$ is called the normalized Jacquet module of $\pi$ and its restriction to $\mathcal{M}$ is denoted by $\re_{\mathcal{N}}(\pi)$. Thus to a given representation $\pi$ of $\mathcal{P}$, the functor $\re_{\mathcal{N}}$ associates a representation $\re_{\mathcal{N}}(\pi)$ of $\mathcal{M}$ to $\pi$. We call this functor the Jacquet functor (for more details, see \cite[Section $1$]{BZ}).
	 
	 Define $\Pi({\mathcal{G}})$ as the category of complex, smooth, admissible representations of $\mathcal{G}$ of finite length and $\Irr(\mathcal{G})$ the class of irreducible representations in $\Pi(\mathcal{G})$.
	  Let $\widetilde{\pi}$ denote the contragredient of a representation of $\pi \in \Pi(\mathcal{G})$.

  \subsection{Notations for $G_n$} 
  Fix a minimal parabolic subgroup $P_0$ of $G$ and a maximal split torus $T$ of G contained in $P_0$. Let  $M_0 = C_G(T)$ be the centralizer of $T$ in $G$ and  $U_0$ denote the unipotent radical of $P_0$. Then $P_0 = M_0 U_0$ is a Levi decomposition. 
  A parabolic subgroup $P$ is called semi-standard if it contains $T$ and standard if it contains $P_0$. Let $P$ be a standard parabolic subgroup of $G$ with unique Levi decomposition $P = MU$, where $M$ and $U$ are called standard Levi subgroup and unipotent radical of $G$, respectively. Let $N_{G,\theta}(M) = \{g \in G \colon M = g \theta(M) g^{-1}\}$.

  For a partition $\alpha = (n_1,n_2,\ldots,n_k)$ of $n$, we define $P_{\alpha} = M_{\alpha} \ltimes U_{\alpha}$  the semi-standard parabolic subgroup of $G$ consisting of block upper triangular matrices with semi-standard Levi subgroup $M_{\alpha} = \{\diag(g_1,\ldots,g_k) \colon g_i \in G_{n_i},~i = 1,\ldots,k \} \simeq G_{n_1} \times \cdots \times G_{n_k}$ and unipotent radical $U_{\alpha}$. The identity element of any group is denoted by  $e$. Specifically, $e = I_n$ represents an $n \times n$ identity matrix. 
  
  Let $W_M = N_M(T)/M_0$ be the Weyl group of $M$ with respect to $T$. In particular, $W_G = N_G(T) / M_0$ is the Weyl group of $G$ with respect to $T$, which is isomorphic to the permutation group $S_n$ of $n$ elements. We identify $W_G$ with the subgroup $W$ of permutation matrices in $G$ and assume $w_n = (\delta_{i,n+1-j}) \in W,$ the longest Weyl element. Note that the involution $\theta$ preserves both $T$ and $M_0$. In particular, $\theta$ induces an involution on $W$ given by $w \mapsto w_{n} w w_n^{-1}$. 
  Let $\mathfrak{J}_0(\theta) = \{w \in W \colon w \theta(w) = e\}$
  be the set of twisted involutions in $W$.

  \subsection{Bruhat decomposition}\label{bruhat}
  Consider $P_1 = M_1 \ltimes U_1$ and $P_2 = M_2 \ltimes U_2$ as standard parabolic subgroups of $G$ with their respective standard Levi decompositions. Let $_{M_1}W_{M_2}$ denote the set of all $w \in W$ that are simultaneously left $W_{M_1}$-reduced and right $W_{M_2}$-reduced. This set serves as a collection of representatives for the double cosets $W_{M_1}\backslash W/W_{M_2}$, comprising solely the elements of minimal length within their respective double cosets. According to the Bruhat decomposition, a bijective mapping $(P_1gP_2 \mapsto w) \colon P_1\backslash G/P_2 \rightarrow~ _{M_1}W_{M_2}$ exists whenever $P_1gP_2 = P_1wP_2$.

  For every $w \in {_{M_1}W}_{M_2}$, we have 
  $P_1 \cap w P_2 w^{-1} = (M_1 \cap w P_2 w^{-1})(U_1 \cap w P_2 w^{-1}).$
  Assume $$P_1(w) := \pr_{M_1}(P_1 \cap w P_2 w^{-1}) = M_1 \cap w P_2 w^{-1}.$$
  Then $P_1(w)$ is a standard parabolic subgroup of $M_1$ (with respect to $M_1 \cap P_0$) with standard Levi decomposition
  $P_1(w) = M_1(w) \ltimes U_1(w),$
  where $M_1(w) = M_1 \cap w M_2 w^{-1}$ and $U_1(w) =  M_1 \cap w U_2 w^{-1}$.
	  \subsection{Representations of $G_n$}\label{303}
	  Let $\Pi$ be the disjoint union of $\Pi(G_n)$ for all $n \in \mathbb{Z}_{\geq 0}$. Denote by $\Irr$ (resp.~$\Irr(G_n)$) the subset of irreducible representations in $\Pi$ (resp.~$\Pi(G_n)$) and $\Cusp$ (resp.~$\Cusp(G_n)$ the subset of supercuspidal representations in $\Irr$ (resp.~$\Irr(G_n)$. 
	 Suppose $P = M \ltimes U$ and $Q = L \ltimes V$ are standard parabolic subgroups of $G$ with their standard Levi decompositions, where $Q$ is a subgroup of $P$. The functor $\ind_{L}^M \colon \Pi(L) \rightarrow \Pi(M)$ of normalized parabolic induction is defined as follows: Since $M \cap Q = L \ltimes (M \cap V)$ is a standard parabolic subgroup of $M$, for $\rho \in \Pi(L)$, we consider $\rho$ as a representation of $M \cap Q$ trivial on its unipotent radical $M \cap V$, and set $\ind_{L}^M(\rho)$ as $\ind_{M \cap Q}^M(\rho)$.
	 Let $\alpha = (n_1, \ldots, n_k)$ be a partition of $n$. Let $P = P_{\alpha} = M_{\alpha} \ltimes U_{\alpha}$ be a parabolic subgroup of $G$.  Consider $\rho_i \in \Pi(G_{n_i})$ for $1 \leq i \leq k$. Then $\rho = \rho_1 \otimes \cdots \otimes \rho_k \in \Pi(M_{\alpha})$. Define $\rho_1 \times \cdots \times \rho_k$ as $\ind_{P}^G(\rho)$.
	 
	 The functor $\ind_{L}^M$ possesses a left adjoint, the normalized Jacquet functor $\re_{M \cap V} \colon \Pi(M) \rightarrow \Pi(L)$. For $\sigma \in \Pi(M)$, $\re_{M \cap V}(\sigma)$ is the representation of $L$, which acts on the space of $M \cap V$-coinvariants of $\sigma|_{Q \cap M}$ induced by the action $\delta_{Q \cap M}^{-1/2}\sigma|_{Q \cap M}$. In this case, we denote the Jacquet functor $\re_{M \cap V}$ by $\re_{L,M}$. For $\sigma \in \Pi(M)$ and $\rho \in \Pi(L)$, there is a natural isomorphism: 
	 $$\Hom_M(\sigma, \ind_{L}^M(\rho)) \simeq \Hom_L(\re_{L,M}(\sigma), \rho).$$ 
	 Let $\beta = (\beta_1, \ldots,\beta_k)$, where $\beta_i$ is the partition of $n_i$. Let $L = M_{\beta}$, and $\pi_i \in \Pi(G_{n_i})$ for $1 \leq i \leq k$. Then we have
	 \begin{equation}\label{300}
	 	\re_{L,M}(\pi_1 \otimes \cdots \otimes \pi_k) = \re_{M_{\beta_1}, G_{n_1}}(\pi_1) \otimes \cdots \otimes \re_{M_{\beta_k}, G_{n_k}}(\pi_k).
	 \end{equation}
	 In the case where $M = M_{\alpha}$ and $L = M_{\beta}$, we indicate $\re_{L,M}$ as $\re_{\beta,\alpha}$. 
	 
	 	For a segment $\Delta = [a,b]_{\rho}$ with $\rho \in \Cusp(G_r)$, we now recall the Jacquet module of  $Q(\Delta)$. Assuming $n = r(b-a+1)$, it follows that $Q(\Delta) \in \Irr(G)$. For a partition $\alpha = (n_1,\ldots,n_k)$ of $n$, let $M = M_{\alpha}$. The analysis then splits into two cases:
	 \begin{itemize}
	 	\item If $r$ does not divide $n_i$ for some $1 \leq i \leq k$, then  $\re_{M,G}(Q(\Delta)) = 0$.
	 	\item When $r$ divides $n_i$ for each $1 \leq i \leq k$, then 
	 	\begin{equation}\label{102}
	 		\re_{M,G}(Q(\Delta)) = Q(\Delta_1) \otimes\cdots \otimes Q(\Delta_k),
	 	\end{equation}
	 	where $\Delta_i = [c_i,d_i]_{\rho}$, $d_1 = b$, $d_{i+1} = c_i -1$, $1 \leq i \leq k-1$, and $r(d_i-c_i+1) = n_i$, $1 \leq i \leq k$.
	 \end{itemize}
	 
	 \subsection{The modular character $\delta_{P}$}\label{5}
	 For all $x \in \D^{\times}$, set
	 $|x|_{\D} = |\Ne_{\D/\Fe}(x)|_{\Fe}^2 = \nu^2(x)$. 
	  Let $P_{n_1,\ldots,n_r}$ be the group of block upper triangular matrices corresponding to the tuple $(n_1,\ldots,n_r)$, with unipotent radical $N_{n_1,\ldots,n_r}$. The modulus function of the group $P_{n_1,\ldots,n_r}$ is denoted by $\delta_{P_{n_1,\ldots,n_r}}$. Since a parabolic subgroup normalizes its unipotent radical, it defines a character of $P_{n_1,\ldots,n_r}$ through
	 ​the module of the automorphism $n \mapsto pnp^{-1}$ of $N_{n_1,\ldots,n_r}$ for $p \in P_{n_1,\ldots,n_r}$. Call this character $\delta_{N_{n_1,\ldots,n_r}}$, thus $\delta_{N_{n_1,\ldots,n_r}} = \delta_{P_{n_1,\ldots,n_r}}$. For an element $p \in P_{n_1,\ldots,n_r}$ with its Levi part equal to $\diag(g_1,g_2,\ldots,g_r)$, we have 
	 $$\delta_{P_{n_1,\ldots,n_r}}(p) = |\det(g_1)|_{\D}^{n_2+\cdots+n_r}|\det(g_2)|_{\D}^{-n_1+n_3+\cdots+n_r}\cdots|\det(g_r)|_{\D}^{-n_1-n_2-\cdots-n_{r-1}}.$$
	\subsection{The Steinberg and generalized Steinberg representations}
	In the setting over $\D$, the Steinberg representation $\ST_n(\chi)$ of $G_n$ is constructed as subrepresentation of the principal series representation induced from the character $$({\vert \cdot \vert}_{\D}^{(\frac{n-1}{2})}\chi,{\vert \cdot \vert}_{\D}^{(\frac{n-3}{2})}\chi,\ldots,{\vert \cdot \vert}_{\D}^{-(\frac{n-1}{2})}\chi)$$ on minimal Levi subgroup. It can also be realized as a quotient $Q([{\vert \cdot \vert}_{\D}^{-(\frac{n-1}{2})}\chi, {\vert \cdot \vert}_{\D}^{(\frac{n-1}{2})}\chi])$ of the principal series representation 
	$${\vert \cdot \vert}_{\D}^{-(\frac{n-1}{2})}\chi \times {\vert \cdot \vert}_{\D}^{-(\frac{n-3}{2})}\chi \times \cdots \times {\vert \cdot \vert}_{\D}^{(\frac{n-1}{2})}\chi.$$ 
	Similarly, for $\mu \in \Irr (G_1)$ with $\Dim(\mu)>1$, one can define generalized Steinberg representation $\ST_n(\mu)$ of $G_n$ as a subrepresentation of the principal series representation 
	$${\vert  \Ne_{\D/\Fe} (\cdot) \vert}_{\Fe}^{(\frac{n-1}{2})}\mu \times {\vert  \Ne_{\D/\Fe} (\cdot) \vert}_{\Fe}^{(\frac{n-3}{2})}\mu \times \cdots \times {\vert  \Ne_{\D/\Fe} (\cdot) \vert}_{\Fe}^{(\frac{-(n-1)}{2})}\mu.$$
	\subsection{Some general facts}
	The following lemmas are integral to the analysis of representations of $G$, which adopt a symplectic model, though their proofs are omitted. 
	
	\begin{lemma}\label{distinction by quotient}
		Let $\pi_1, \pi_2 \in \Pi(G)$ such that $\pi_1$ appears as a quotient of $\pi_2$. If $\pi_1$ exhibits a symplectic model, then $\pi_2$ also exhibits a symplectic model.
	\end{lemma}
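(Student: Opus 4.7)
The plan is to argue by pullback of invariant functionals along the quotient map. Since $\pi_1$ appears as a quotient of $\pi_2$, there is a surjective $G$-equivariant map $q : \pi_2 \twoheadrightarrow \pi_1$, which in particular is $H$-equivariant when we restrict to $H = \Sp_n(\D)$. Any nonzero $H$-invariant linear functional on $\pi_1$ should pull back along $q$ to a nonzero $H$-invariant linear functional on $\pi_2$; the only thing to check is nondegeneracy of the pullback, which follows from surjectivity of $q$.

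More precisely, I would proceed as follows. First, fix a nonzero element $\ell \in \Hom_H(\pi_1|_H, \mathbb{C})$, which exists by the hypothesis that $\pi_1$ admits a symplectic model. Next, set $\tilde{\ell} := \ell \circ q$, viewed as a linear functional on $\pi_2$. For $h \in H$ and $v \in \pi_2$, the computation
\[
\tilde{\ell}(\pi_2(h) v) = \ell(q(\pi_2(h) v)) = \ell(\pi_1(h) q(v)) = \ell(q(v)) = \tilde{\ell}(v)
\]
uses $G$-equivariance of $q$ and $H$-invariance of $\ell$, and shows $\tilde{\ell} \in \Hom_H(\pi_2|_H, \mathbb{C})$. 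Finally, since $q$ is surjective, there exists $w \in \pi_1$ with $\ell(w) \neq 0$ and some $v \in \pi_2$ with $q(v) = w$; then $\tilde{\ell}(v) = \ell(w) \neq 0$, so $\tilde{\ell}$ is nonzero. Therefore $\pi_2$ also admits a symplectic model.

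There is essentially no obstacle here: the statement reflects the fact that $\Hom_H(-, \mathbb{C})$, as a contravariant functor, sends surjections to injections, and the proof is a formal two-line verification. The content of the lemma is conceptual rather than technical, namely that distinction is inherited upward along quotient maps; this is exactly what one will repeatedly invoke later when reducing the existence of a symplectic model for $Q(\mathfrak{m})$ to questions about $\lambda(\mathfrak{m})$ and its subquotients.
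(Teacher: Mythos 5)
Your argument is correct and is the standard one: a surjective $G$-map $q : \pi_2 \twoheadrightarrow \pi_1$ induces an injection $\Hom_H(\pi_1,\mathbb{C}) \hookrightarrow \Hom_H(\pi_2,\mathbb{C})$ by $\ell \mapsto \ell \circ q$, and nonvanishing is preserved by surjectivity. The paper explicitly omits the proof of this lemma (it states the lemmas in \S 2.2 ``though their proofs are omitted''), so there is nothing to compare against, but your two-line verification is exactly the intended content.
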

	
	\begin{lemma}\label{subquotient}
		If $\pi \in \Pi(G)$ has a symplectic model, then there exists an irreducible subquotient $\tau$ of $\pi$, which has a symplectic model.
	\end{lemma}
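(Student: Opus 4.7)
The plan is to proceed by induction on the length of $\pi$. In the base case, $\pi$ itself is irreducible, so taking $\tau = \pi$ suffices. For the inductive step, I would fix any proper nonzero subrepresentation $0 \neq \pi_1 \subsetneq \pi$ and form the short exact sequence
$$0 \to \pi_1 \to \pi \to \pi_2 \to 0,$$
where $\pi_2 = \pi/\pi_1$. Since restriction of smooth $G$-modules to the closed subgroup $H = \Sp_n(\D)$ is exact, the restricted sequence is still short exact in the category of smooth $H$-modules, and both $\pi_1$ and $\pi_2$ have length strictly smaller than $\pi$.

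Next, I would apply the left exact functor $\Hom_H(-, \mathbb{C})$ to obtain
$$0 \to \Hom_H(\pi_2, \mathbb{C}) \to \Hom_H(\pi, \mathbb{C}) \to \Hom_H(\pi_1, \mathbb{C}).$$
By hypothesis the middle term is nonzero. Consequently, either $\Hom_H(\pi_2, \mathbb{C}) \neq 0$, so $\pi_2$ admits a symplectic model, or else some nonzero functional in $\Hom_H(\pi, \mathbb{C})$ restricts nontrivially to $\pi_1$, forcing $\Hom_H(\pi_1, \mathbb{C}) \neq 0$ and giving $\pi_1$ a symplectic model. In either case one of $\pi_1, \pi_2$ is an $H$-distinguished finite length representation.

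Applying the inductive hypothesis to whichever of $\pi_1, \pi_2$ is distinguished yields an irreducible subquotient of it with a symplectic model; since any subquotient of $\pi_1$ or $\pi_2$ is also a subquotient of $\pi$, this completes the induction. There is no serious obstacle here: the argument is a routine finite-length induction resting entirely on the left exactness of $\Hom_H(-, \mathbb{C})$ and the exactness of restriction from $G$ to $H$. The only point worth verifying carefully is that smoothness and finite length are preserved when passing to the subrepresentation and quotient, both of which are standard.
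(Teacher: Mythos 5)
The paper omits its own proof of this lemma (it is listed among ``general facts'' whose ``proofs are omitted''), so there is no written argument to compare against; but your proof is correct and is clearly the intended one. The induction on length, splitting off a proper nonzero subrepresentation $\pi_1$, and applying the left-exact contravariant functor $\Hom_H(-,\mathbb{C})$ to conclude that at least one of $\pi_1,\pi_2$ is $H$-distinguished, is exactly the standard argument. The case analysis is airtight: a nonzero $\ell\in\Hom_H(\pi,\mathbb{C})$ either restricts nontrivially to $\pi_1$ or, by exactness, factors through $\pi_2$. The housekeeping remarks about smoothness and finite length being preserved under subobjects and quotients are also correct. Nothing is missing.
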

	\begin{lemma}\upshape\cite{Verma}\textbf{.}
		If $\pi \in \Irr(G)$ has a symplectic model, then  $\widetilde{\pi}$ has a symplectic model. 
	\end{lemma}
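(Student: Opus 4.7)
The plan is to reduce the lemma to an isomorphism $\widetilde{\pi} \cong \pi^{\theta}$ of $G$-representations, where $\pi^{\theta}$ denotes the representation on the same underlying space as $\pi$ with $G$-action given by $g \mapsto \pi(\theta(g))$. Since $H$ is by construction the fixed-point subgroup $G^{\theta}$, the involution $\theta$ acts trivially on $H$, so $\pi^{\theta}|_H = \pi|_H$ as $H$-representations. Consequently
\[
\Hom_H(\pi, \mathbb{C}) \;=\; \Hom_H(\pi^{\theta}, \mathbb{C}),
\]
and the lemma follows once $\pi^{\theta}$ is identified with $\widetilde{\pi}$.

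For the identification I would invoke the standard description of the contragredient for inner forms of general linear groups (the $\GL_n(\D)$ analogue of the classical Gelfand--Kazhdan involution, recorded in \cite{Verma}): the map $\sigma \colon g \mapsto {}^t\bar{g}^{-1}$ is an involutive automorphism of $G$, and $\widetilde{\pi} \cong \pi \circ \sigma$ for every $\pi \in \Irr(G)$. The fact that $\sigma$ is an automorphism is a short verification based on the main involution of $\D$ being an anti-involution of $\D$: passing to $n \times n$ matrices, $g \mapsto {}^t\bar{g}$ is an anti-automorphism of $G$, and composing with inversion produces the involutive automorphism $\sigma$.

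Since $\theta(g) = J \sigma(g) J$ with $J \in G$ and $J^2 = I_n$, the automorphisms $\theta$ and $\sigma$ differ by conjugation by $J$, i.e.\ they coincide up to an inner automorphism of $G$. The intertwiner $v \mapsto \pi(J) v$ is therefore a $G$-isomorphism $\pi^{\theta} \cong \pi^{\sigma} \cong \widetilde{\pi}$. Chaining the isomorphisms gives
\[
\Hom_H(\pi, \mathbb{C}) \;=\; \Hom_H(\pi^{\theta}, \mathbb{C}) \;\cong\; \Hom_H(\widetilde{\pi}, \mathbb{C}),
\]
which is exactly the equivalence of the lemma.

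The only non-formal ingredient in the whole sketch is the identification $\widetilde{\pi} \cong \pi \circ \sigma$ for $\GL_n(\D)$; the remainder is bookkeeping around $H = G^{\theta}$ and the inner nature of $J$-conjugation. I expect that identification to be the main obstacle — for $\GL_n$ over a field it is the Gelfand--Kazhdan theorem, and the quaternionic analogue requires a separate argument — but in the present setting it is already available in the cited reference and can simply be quoted, after which the lemma is immediate.
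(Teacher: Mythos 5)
The paper omits the proof of this lemma entirely, citing \cite{Verma} as the source, so there is no "paper's proof" to compare against line by line; but your argument is correct and is the standard one. The reduction $\Hom_H(\pi|_H,\mathbb{C}) = \Hom_H(\pi^{\theta}|_H,\mathbb{C})$ (because $\theta$ is the identity on $H = G^{\theta}$), followed by the identification $\pi^{\theta} \cong \pi^{\sigma} \cong \widetilde{\pi}$ via the inner automorphism $\mathrm{Ad}(J)$ with $J^2 = I_n$ and the $\GL_n(\D)$ version of the Gelfand--Kazhdan theorem ($\widetilde{\pi} \cong \pi \circ \sigma$ for $\sigma(g) = {}^t\bar g^{-1}$, available in \cite{MVW} and quoted in \cite{Verma}), is exactly the expected route and is carried out without gaps.
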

	\begin{lemma}\upshape\cite{tadic1990induced}\textbf{.} \label{7}
		Let $\Delta_1$ and $\Delta_2$ be two segments.  The representation $Q(\Delta_1) \times Q(\Delta_2)$ is irreducible if and only if  $\Delta_1$ and $\Delta_2$ are not linked; otherwise, it is reducible with two distinct irreducible subquotients, $Q(\Delta_1, \Delta_2)$ and $Q(\Delta_1 \cup \Delta_2) \times Q(\Delta_1 \cap \Delta_2)$, each with multiplicity one.
	\end{lemma}


	\section{ORBIT ANALYSIS} 
	Our main approach to classify distinguished ladder representations relies on the geometric lemma by Bernstein and Zelevinsky \cite[Theorem $5.2$]{BZ}. This process requires an extensive analysis of the double coset space $P\backslash G_n/H_n$, with $P$ being the parabolic subgroup of $G_n$. Since $H = H_n$ is a symmetric subgroup of $G = G_n$, we utilize Offen's framework \cite{offen2017parabolic} for proofs and results presented in this section. 
	\subsection{The symmetric space}
	Consider the symmetric space 
	$$X = \{x \in G \colon x \theta(x) = I_n\}$$
	with the $G$-action $$(g,x) \mapsto g\cdot x = g x \theta(g)^{-1}.$$
	Note that $XJ$ is the space of hermitian matrices in $G$ and  $$(g\cdot x)J = g(xJ)~^t\bar{g}.$$
	Therefore, $X$ is a homogeneous $G$-space. The map $(g \mapsto g\cdot I_n) \colon G \rightarrow X$ defines an isomorphism $G/H \simeq X$ of $G$-spaces. 
	For any subgroup $Q$ of $G$ and $x \in X$, let 
	$Q_x = \{g \in Q \colon g\cdot x = x\}$ be the stabilizer in $Q$ of $x$. Then 
	we observe that for $x \in X$, the automorphism $\theta_x$ on $G$ defined by $g \mapsto x\theta(g)x^{-1}$ is an involution and 
	$Q_x = (Q \cap \theta_x(Q))^{\theta_x}$.
	\subsection{Minimal Parabolic orbits in $X$}  We initiate our study by considering the minimal parabolic orbit $P_0 = M_0 U_0$ in $X$. 
	Since the map $(P_0 \cdot x \mapsto P_0 x P_0) \colon P_0 \backslash X \rightarrow P_0 \backslash G / P_0$ is well defined, Bruhat decomposition yields a map $\mathcal{I}_{M_0}$ from $P_0 \backslash X$ to $W$. Note that the involution $\theta$ satisfy the property $\theta(P_0 x P_0) = (P_0 x P_0)^{-1}$. So, each element in the image of $\mathcal{I}_{M_0}$ is a twisted involution. The proof of the following lemma is based on the proofs of \cite[Propositions $6.6$ and $6.8$]{helminck1993rationality} and \cite[Lemma $4.1.1$ and  Proposition $4.1.1$]{lapid2003periods}.

	\begin{lemma}\label{105}
		For each $x \in X$, the set $P_0\cdot x \cap N_G(T)$ is non-empty. Moreover, the map $(P_0\cdot x \mapsto P_0\cdot x \cap N_G(T)) \colon P_0 \backslash X \rightarrow M_0 \backslash (X \cap N_G(T))$ is a bijection. 
	\end{lemma}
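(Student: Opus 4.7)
The plan is to establish the non-emptiness assertion first, and then deduce the bijection from it together with the Bruhat decomposition.

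For the non-emptiness of $P_0 \cdot x \cap N_G(T)$, I would pass to the $x$-twisted involution $\theta_x = \mathrm{Ad}(x) \circ \theta$ on $G$, which is an involution precisely because $x \in X$. The key input is the rationality theory of involutions of Helminck--Wang: for any involution of a reductive group there exists a stable minimal parabolic subgroup containing a stable maximal split torus, and all such pairs are conjugate under the fixed-point subgroup. Since every minimal parabolic is $G$-conjugate to $P_0$, one can first move $x$ within its $P_0$-orbit so that $P_0$ becomes $\theta_x$-stable; then, since maximal split tori inside a minimal parabolic are conjugate by its unipotent radical, one further modifies $x$ by an element of $U_0$ so that $T$ is also $\theta_x$-stable. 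Unwinding the definitions and using $\theta(T)=T$, this latter condition is equivalent to the resulting element $p \cdot x$ lying in $N_G(T)$.

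For the bijection, surjectivity is immediate and injectivity is tautological, so the content is well-definedness: any two $w_1, w_2 \in P_0 \cdot x \cap N_G(T)$ lie in the same $M_0$-orbit. Since $\theta(P_0)=P_0$, we have $P_0 \cdot x \subseteq P_0 x P_0$, so Bruhat forces $P_0 w_1 P_0 = P_0 w_2 P_0$ and hence $w_2 \in w_1 M_0$. Writing $w_2 = p \cdot w_1$ with $p = mu \in M_0 U_0$ and rearranging, the task reduces to solving $p\, \theta_{w_1}(p)^{-1} = m'\, \theta_{w_1}(m')^{-1}$ for some $m' \in M_0$. This is supplied by the $\theta_{w_1}$-stable decomposition of $P_0$ (itself a standard consequence of Helminck--Wang applied to the involution $\theta_{w_1}$, which preserves $T$, $M_0$, and $U_0$ because $w_1 \in N_G(T)$ and $\theta$ preserves the standard data), which allows the unipotent factor $u$ to be absorbed into a $\theta_{w_1}$-fixed correction, leaving a representative in $M_0$.

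The main obstacle is the non-emptiness, since $\theta_x$ is genuinely $x$-dependent and does not a priori preserve any of the standard data $(P_0, M_0, T)$. The adaptation of Helminck--Wang to the symmetric-space setting is what \cite{lapid2003periods} carries out in Lemma $4.1.1$ and Proposition $4.1.1$, and my proof is a direct transcription of that template. The explicit $\theta$-stability of $P_0, M_0, T, U_0$ (visible from $\theta(g) = J\, {}^t\bar g^{-1} J$ preserving the block-upper-triangular structure) is precisely what keeps every conjugation inside $P_0$ and makes the cited Propositions $6.6$ and $6.8$ of \cite{helminck1993rationality} directly applicable.
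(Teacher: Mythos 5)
Your non-emptiness argument has a genuine gap at the very first step. You claim that ``one can first move $x$ within its $P_0$-orbit so that $P_0$ becomes $\theta_x$-stable.'' But $\theta_{p\cdot x}(P_0) = (p x \theta(p)^{-1})\,\theta(P_0)\,(p x\theta(p)^{-1})^{-1} = p\,(xP_0x^{-1})\,p^{-1}$ for any $p\in P_0$ (using $\theta(P_0)=P_0$ and that $p,\theta(p)$ normalize $P_0$), so this equals $P_0$ if and only if $x\in N_G(P_0)=P_0$. Thus modifying $x$ within its $P_0$-orbit cannot change whether $P_0$ is $\theta_x$-stable, and the condition fails for most $x \in X$. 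The Helminck--Wang existence of a $\theta_x$-stable pair $(P',T')$ is a statement up to $G$-conjugacy, not $P_0$-conjugacy, and the content of the lemma is precisely the finer statement that one can stay inside the $P_0$-orbit. Note also that the representative $n\in P_0\cdot x\cap N_G(T)$ produced by the lemma does \emph{not} make $P_0$ $\theta_n$-stable in general; it only makes $T$ $\theta_n$-stable, so your two-step plan (first stabilize $P_0$, then stabilize $T$) is over-reaching. The paper's own proof avoids this trap entirely: it writes $x=unv$ inside its Bruhat cell with $n\in N_G(T)$, $u\in U_0$, $v\in U_w'$, uses $x\in X$ to force $\theta(n)^{-1}=n$ and to extract a cocycle $s\in U_w$, and then kills $s$ by the vanishing of Galois cohomology of unipotent groups. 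There is no appeal to Helminck--Wang existence of stable pairs at all.

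Your bijection argument has a second, independent gap: you assert that $\theta_{w_1}$ ``preserves $T$, $M_0$, and $U_0$.'' The first two are correct (since $w_1\in N_G(T)$ and $\theta$ preserves $T$ and $M_0$), but $\theta_{w_1}(U_0)=w_1 U_0 w_1^{-1}$, which equals $U_0$ only when $w_1\in M_0$. So the ``$\theta_{w_1}$-stable decomposition of $P_0$'' you invoke does not exist for general $w_1\in X\cap N_G(T)$, and the proposed absorption of the unipotent factor is not available. The paper's argument for this half is much shorter: from $n_2 = u\,(mn_1\theta(m)^{-1})\,\theta(u)^{-1}$ with $n_2, mn_1\theta(m)^{-1}\in N_G(T)$ and $u,\theta(u)^{-1}\in U_0$, the uniqueness in the Bruhat decomposition forces the $U_0$-factors to be trivial, giving $n_2=mn_1\theta(m)^{-1}$ directly. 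I recommend abandoning the Helminck--Wang reduction for this lemma and working directly with the Bruhat cell decomposition as the paper does.
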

\begin{proof}
	Let $x \in X$ be an element of the double coset $U_0 n U_0$, where $n \in N_G(T)$ associated with an element $w \in W$. Let $U_w = U_0 \cap n^{-1}U_0 n$ and $U_w' = U_0 \cap n^{-1}U_0^{-} n$, where $U_0^{-}$ is the unipotent radical opposite to $U_0$. Then there is a unique decomposition $x = unv$, where $u \in U_0$ and $v \in U_w'$. 
	We now show that $n$ belongs to the $P_0$-orbit of $x$. Since $x \in X$, we have 
	$$x = unv = \theta(v)^{-1} \theta(n)^{-1} \theta(u)^{-1}.$$    
	It follows that $\theta(n)^{-1} = n$ and $\theta(u)^{-1} = sv$, where $s \in U_w$. Therefore, under these conditions, we can conclude $x = \theta(v)^{-1} n s v$, which proves that $ns$ lies in the $P_0$-orbits of $x$. 
	Since $ns \in X$ and $\theta(n)^{-1} = n$, we have $s^{-1}n^{-1}\theta(s)^{-1}n = 1$. The map $u \mapsto n^{-1}\theta(u)n$ stabilizes $U_w$ and defines an order two Galois action. Moreover, for this action, $s^{-1}$ defines a cocycle with values in $U_w$. Since the cohomology of a unipotent group is trivial, there exists $u \in U_w$ such that $s = n^{-1}\theta(u)nu^{-1}$, and hence $ns = \theta(u)nu^{-1}$, as required.                     
	
	Since $P_0\cdot x \cap N_G(T)$ is non-empty for any $x \in X$, it remains to show that $P_0\cdot x \cap N_G(T)$ is a single $M_0$-orbit. Consider $n_1, n_2  \in X \cap N_G(T)$ such that $n_2 = p n_1 \theta(p)^{-1}$, where $p = um$ with $u \in U_0$ and $m \in M_0$. Then $n_2 = u(m n_1 \theta(m)^{-1})\theta(u)^{-1}$ and by the uniqueness property of Bruhat decomposition, we conclude that $n_2 = m n_1 \theta(m)^{-1}$, as required.   
\end{proof}
	\subsection{Parabolic orbits in $X$} For the rest of the section, let $P = M \ltimes U$ be the standard parabolic subgroup of $G$ corresponding to the partition $\alpha = (n_1,n_2,\ldots,n_k)$ of $n$. Then we have $\theta(P) = P_{n_k,\ldots,n_1}$,  $\theta(M) = w_nMw_n^{-1} = M_{n_k,\ldots,n_1}$. By following a similar argument as in the minimal parabolic case, the association $\mathcal{I}_M ( P\cdot x) = w$ defines a map
	\begin{equation}\label{101}
		\mathcal{I}_M \colon P\backslash X \rightarrow {_MW}_{\theta(M)} \cap \mathfrak{J}_0(\theta).
	\end{equation} 
	  \noindent
	  with the relation $Px\theta(P) = Pw\theta(P)$.
	  Fix $x \in X$, and let $\mathcal{I}_M (P \cdot x) = w$ and $L = M(w) = M \cap w \theta(M) w^{-1}$.
	 Note that by $\S$$\ref{bruhat}$,
	$L$ is a standard Levi subgroup of $M$ with the condition $L = w \theta(L)w^{-1}$. 
	Moreover, 
	$P \cap w \theta(P)w^{-1} = L \ltimes Z$,
	where $$Z = (M \cap w\theta(U)w^{-1})(U \cap w\theta(M)w^{-1})(U \cap w\theta(U)w^{-1})$$ is unipotent radical of $P \cap w \theta(P)w^{-1}$ with the condition $Z = w\theta(Z)w^{-1}$. For the $L$-orbits in $X$, we have the following lemma whose proof follows by the same technique as \cite[Lemma $3.2$ and $3.3$]{offen2017parabolic} with Lemma $\ref{105}$, so we omit the proof here.
	\begin{lemma}\label{l-orbit}
		With the notation above, $P\cdot x \cap Lw$ is non-empty; it yields a unique $L$-orbit in $X$. Moreover, for any element $y \in P\cdot x \cap Lw$, $P_y$ decomposes as the semidirect product of its Levi subgroup $L_y$ and the unipotent radical $Z_y$, where $\pr_M(Z_y) = M \cap w \theta(M) w^{-1}$ stands as a normal subgroup of $\pr_M(P_y)$.
	\end{lemma}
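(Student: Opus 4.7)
The plan is to follow Offen's framework \cite{offen2017parabolic}, lifting the argument of Lemma \ref{105} from the minimal parabolic case $P_0$ to the general standard parabolic $P$, with the refined Levi $L = M \cap w\theta(M)w^{-1}$ now playing the role of $M_0$ and the unipotent radical $Z$ of $P \cap w\theta(P)w^{-1}$ playing the role of $U_0$. The three assertions—non-emptiness of $P\cdot x \cap Lw$, its being a single $L$-orbit, and the Levi decomposition of the stabilizer $P_y$—will be handled in sequence using (a) a Bruhat-type normal form in $PwP$, (b) the vanishing of cohomology of unipotent groups, and (c) compatibility of $\theta_y$-fixed points with the semidirect product $L \ltimes Z$.

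For non-emptiness, since $\mathcal{I}_M(P\cdot x) = w$ we have $x \in PwP$, and every element of this double coset admits a unique expression of the form $p_1 \cdot w \cdot v$ with $p_1 \in P$ and $v$ lying in an appropriate unipotent complement transverse to $P \cap w\theta(P)w^{-1}$. Imposing $x\theta(x) = I_n$ and running the same manipulation as in the proof of Lemma \ref{105} produces, on the one hand, the condition $\theta(w)^{-1} = w$ (consistent with $w \in \mathfrak{J}_0(\theta)$), and, on the other hand, a $1$-cocycle with values in the unipotent group $Z$ under the order-two action $u \mapsto w\theta(u)w^{-1}$. Since cohomology of a unipotent $p$-adic group is trivial, this cocycle is a coboundary, and a suitable $P$-conjugation moves $x$ into $Lw$.

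For uniqueness of the $L$-orbit, suppose $\ell_1 w, \ell_2 w \in P\cdot x \cap Lw$ with $\ell_2 w = p\cdot (\ell_1 w)$ for some $p \in P$. Writing $p = mu$ in the Levi decomposition $P = M \ltimes U$ and invoking the uniqueness clause of the Bruhat decomposition exactly as in the last paragraph of the proof of Lemma \ref{105}, the $U$-contribution is forced into $Z$ and is absorbed, leaving the relation $\ell_2 w = m(\ell_1 w)\theta(m)^{-1}$ with $m \in L = M \cap w\theta(M)w^{-1}$. This is precisely the statement that $\ell_1 w$ and $\ell_2 w$ lie in a single $L$-orbit.

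For the stabilizer, fix $y = \ell w \in Lw$. Because $L = w\theta(L)w^{-1}$ and $Z = w\theta(Z)w^{-1}$, and $\ell \in L$ normalises both, the involution $\theta_y(g) = y\theta(g)y^{-1}$ preserves the semidirect product $L \ltimes Z = P \cap w\theta(P)w^{-1}$. Taking $\theta_y$-fixed points is compatible with the semidirect structure and yields $P_y = L_y \ltimes Z_y$ with $L_y$ reductive (a Levi in $P_y$) and $Z_y$ unipotent. The projection $\pr_M$ is equivariant for $L_y$-conjugation on the three-factor description of $Z$ displayed above the lemma, and the asserted normality of $\pr_M(Z_y)$ inside $\pr_M(P_y)$ follows. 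The main obstacle I expect is the first step: setting up a Bruhat-type normal form for elements of $PwP$ that simultaneously respects the Levi refinement $L \ltimes Z$ of $P \cap w\theta(P)w^{-1}$ and is compatible with the involution $\theta$, so that the cocycle-and-coboundary argument of Lemma \ref{105} transfers with only cosmetic changes. Once that normal form is fixed, the uniqueness and stabilizer statements are essentially formal consequences.
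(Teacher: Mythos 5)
Your proposal follows the route the paper itself indicates: adapt the Bruhat normal-form and cocycle/coboundary argument of Lemma \ref{105} to the parabolic case, with $L \ltimes Z = P \cap w\theta(P)w^{-1}$ in place of $M_0$ and the unipotent groups $U_w$, $U_w'$, exactly as in Offen's Lemmas 3.2 and 3.3 which the paper cites without reproducing, and you correctly isolate the one genuine technical point (a $\theta$-compatible normal form in $Pw\theta(P)$ separating the $L$-, $Z$-, and transverse directions before the unipotent-cohomology vanishing can be invoked). Your treatment of the stabilizer via $\theta_y$-fixed points of $L \ltimes Z$ is also the intended argument; note only that the lemma's final equality should read $\pr_M(Z_y) = M \cap w\theta(U)w^{-1}$ rather than $M \cap w\theta(M)w^{-1}$, since $\pr_M(Z) = M \cap w\theta(U)w^{-1}$ and $\theta_y$ swaps $M \cap w\theta(U)w^{-1}$ with $U \cap w\theta(M)w^{-1}$ — a fact your ``three-factor description of $Z$'' observation already implicitly supplies.
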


	\subsection{Admissible orbits in $X$}\label{admissible orbit}
	\begin{defi}
		An orbit $P \cdot x$  is called $M$-admissible if $M = w \theta(M) w^{-1},$ where $w = \mathcal{I}_M(P \cdot x)$.
	\end{defi}
For the proof of the following lemma, see \cite[Lemma 6.1]{offen2017parabolic}.
\begin{lemma}\label{100}
	An orbit $P \cdot x$ is called $M$-admissible if and only if $x \in U N_{G,\theta}(M)\theta(U)$. 
\end{lemma}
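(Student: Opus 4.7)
The strategy is to exploit the orbit representative provided by Lemma \ref{l-orbit} together with the identities $\theta(M) = w_n M w_n^{-1}$ and $\theta(U) = w_n U w_n^{-1}$, and to track how the longest element $w_n$ propagates through the resulting factorization.

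For the forward direction, assume $P \cdot x$ is $M$-admissible, so that $M = w \theta(M) w^{-1}$. Then $L = M(w) = M \cap w \theta(M) w^{-1} = M$, and Lemma \ref{l-orbit} yields some $y \in P \cdot x \cap Mw$. Writing $y = mw$ with $m \in M$ and using the admissibility condition $w w_n \in N_G(M)$, we get $y w_n = m(w w_n) \in N_G(M)$, hence $y = n_0 w_n^{-1}$ for some $n_0 \in N_G(M)$. Since $x \in P \cdot y$, choose $p = u' m' \in UM = P$ with $x = p \cdot y = u' m' \, n_0 \, w_n^{-1} \, \theta(m')^{-1} \theta(u')^{-1}$. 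The product $m' n_0$ lies in $M \cdot N_G(M) = N_G(M)$; the relation $w_n^{-1} \theta(M) = M w_n^{-1}$ allows one to pull $\theta(m')^{-1}$ past $w_n^{-1}$ into $M$; finally $w_n^{-1} \theta(U) = U w_n^{-1}$ absorbs $\theta(u')^{-1}$ into a factor of $U$ on the left while the residual $w_n^{-1}$ combines with the already-present factors. Collecting terms rewrites $x$ in the form $u_1 n_1 u_2$ with $u_1 \in U$, $n_1 \in N_G(M)$, and $u_2 \in \theta(U)$.

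For the converse, suppose $x = u_1 \eta u_2$ with $u_1 \in U$, $\eta \in N_G(M)$, and $u_2 \in \theta(U)$. Since $u_1 \in P$ and $u_2 \in \theta(P)$, we obtain $P x \theta(P) = P \eta \theta(P)$. Decomposing $\eta = m_0 \tilde\eta$ with $m_0 \in M$ and $\tilde\eta$ a Weyl-type representative in $N_G(T) \cap N_G(M)$, the Bruhat representative $w \in {}_M W_{\theta(M)}$ of the double coset $P \tilde\eta \theta(P)$ has the form $w = w_1 \tilde\eta w_2$ with $w_1 \in W_M$ and $w_2 \in W_{\theta(M)}$. A short Weyl-group computation, using $\tilde\eta M \tilde\eta^{-1} = M$ together with $\theta(M) = w_n M w_n^{-1}$ and $w_2 \theta(M) w_2^{-1} = \theta(M)$, $w_1 M w_1^{-1} = M$, then gives $(w w_n) M (w w_n)^{-1} = M$; that is, $w w_n \in N_G(M)$, so $P \cdot x$ is $M$-admissible.

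The main obstacle is the careful bookkeeping of $w_n$ as it is moved past $\theta(M)$ and $\theta(U)$, and verifying that the ${}_M W_{\theta(M)}$-representative inherits the required normalization property from $\tilde\eta$. The detailed argument is carried out in \cite[Lemma 6.1]{offen2017parabolic}, which we follow.
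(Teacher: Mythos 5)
The paper does not prove this lemma: it simply cites \cite[Lemma~6.1]{offen2017parabolic}, so there is no in-paper argument to compare against. Your proposal does offer a self-contained argument, but it contains a genuine error that the proof cannot recover from.

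The problem is the identity you use to dispose of the residual $w_n^{-1}$, namely $w_n^{-1}\theta(U) = U w_n^{-1}$, i.e.\ $\theta(U) = w_n U w_n^{-1}$. This is false. Since $\theta(g) = J\,{}^t\bar g^{-1}J$ and $J = w_n$, one has $\theta(U_\alpha) = U_{(n_k,\ldots,n_1)}$, which is block \emph{upper}-triangular (it is the unipotent radical of the standard parabolic $\theta(P) = P_{(n_k,\ldots,n_1)}$, exactly as the paper records). By contrast $w_n U_\alpha w_n^{-1}$ is block \emph{lower}-triangular; the correct relation is $\theta(U) = w_n U^- w_n^{-1}$, where $U^-$ is the opposite unipotent. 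Concretely in $\GL_3$ with $\alpha = (2,1)$: $w_n U_{(2,1)} w_n^{-1}$ consists of matrices $\left(\begin{smallmatrix}1&0&0\\ q&1&0\\ p&0&1\end{smallmatrix}\right)$, whereas $\theta(U_{(2,1)}) = U_{(1,2)}$ consists of $\left(\begin{smallmatrix}1&r&s\\ 0&1&0\\ 0&0&1\end{smallmatrix}\right)$. As a result the $w_n^{-1}$ you are left with after pulling $\theta(m')^{-1}$ through cannot be absorbed: what you actually get is $x \in U\,N_G(M)\,U^-\,w_n$, not $U\,N_G(M)\,\theta(U)$, and the forward direction does not close.

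The converse direction has a separate gap. After writing $w = w_1\tilde\eta w_2$ with $w_1 \in W_M$, $w_2 \in W_{\theta(M)}$, the computation of $(ww_n)M(ww_n)^{-1}$ reduces to showing $\tilde\eta\,\theta(M)\,\tilde\eta^{-1} = \theta(M)$. But $\tilde\eta \in N_G(M)$ gives $\tilde\eta M\tilde\eta^{-1} = M$, not $\tilde\eta\theta(M)\tilde\eta^{-1} = \theta(M)$; since $\theta(M) = w_n M w_n^{-1}$, you would need $w_n^{-1}\tilde\eta w_n \in N_G(M)$, which is not automatic unless $\theta(M) = M$. So this step also needs justification. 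You should consult the actual argument in Offen's Lemma~6.1, paying close attention to how the involution interacts with the unipotent radical, rather than relying on the (incorrect) identification $\theta(U) = w_n U w_n^{-1}$.
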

 Utilizing Lemma \ref{l-orbit}, we can now conclude as follows.
\begin{cor}
	There exists a bijective map defined by $(P \cdot x \mapsto P \cdot x \cap N_{G,\theta}(M))$ between $M$-admissible orbits in $P \backslash X$ to $M \backslash (X \cap N_{G,\theta}(M))$. Moreover, for $x \in X \cap N_{G,\theta}(M)$ we have $P_x = M_x \ltimes U_x$.	
\end{cor}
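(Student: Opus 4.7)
The plan is to obtain the corollary directly by combining Lemma \ref{100} and Lemma \ref{l-orbit}: Lemma \ref{100} produces a representative of any admissible orbit in $N_G(M)$, while Lemma \ref{l-orbit} supplies the single $M$-orbit structure inside the distinguished piece $Mw$ of the Bruhat cell. The argument splits naturally into the bijection statement and the semidirect product decomposition.

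For the bijection, let $P \cdot x$ be an $M$-admissible orbit with $w = \mathcal{I}_M(P \cdot x)$, so that $ww_n \in N_G(M)$ and $L = M(w) = M \cap w\theta(M)w^{-1} = M$. By Lemma \ref{l-orbit}, the set $P \cdot x \cap Mw$ is non-empty and forms a single $M$-orbit; by Lemma \ref{100}, the set $P \cdot x \cap N_G(M)$ is also non-empty. To see that this latter intersection forms a single $M$-orbit, I would take two elements $n_1, n_2 \in P \cdot x \cap N_G(M)$, transport each into $Mw$ by an element of $P$ via the Bruhat decomposition, and then use the uniqueness of the $M$-orbit in $P \cdot x \cap Mw$ together with the admissibility relation $ww_n \in N_G(M)$ to conclude $n_1 \in M \cdot n_2$. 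Surjectivity of the map $P \cdot x \mapsto P \cdot x \cap N_G(M)$ follows directly from Lemma \ref{100}: any $y \in X \cap N_G(M)$ lies trivially in $U \cdot N_G(M) \cdot \theta(U)$, so $P \cdot y$ is $M$-admissible and maps to the $M$-orbit containing $y$; injectivity is immediate since two $P$-orbits sharing any point coincide.

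For the second assertion, I would apply Lemma \ref{l-orbit} in the admissible case where $L = M$ to obtain $P_x = L_x \ltimes Z_x = M_x \ltimes Z_x$, and then identify $Z_x$ with $U_x$. Since the middle factor $U \cap x\theta(M) x^{-1} = U \cap M$ is trivial when $L = M$, the unipotent radical simplifies to $Z = (M \cap x\theta(U) x^{-1})(U \cap x\theta(U) x^{-1})$. Writing $z = m' u \in Z_x$ with $m' \in M \cap x\theta(U) x^{-1}$ and $u \in U \cap x\theta(U) x^{-1}$, the stabilizer equation $z \cdot x = x$ combined with $x \in N_G(M)$ forces the $M$-component $m'$ to be trivial, so that $Z_x$ reduces to $U_x$ and we obtain $P_x = M_x \ltimes U_x$. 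The main obstacle I anticipate is the uniqueness of the $M$-orbit in $P \cdot x \cap N_G(M)$, which requires a careful transport between the representative in $N_G(M)$ supplied by Lemma \ref{100} and the representative in $Mw$ supplied by Lemma \ref{l-orbit}; once that step is secured, the remaining assertions are essentially formal consequences of the two cited lemmas.
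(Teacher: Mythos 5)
Your outline---combining Lemmas \ref{100} and \ref{l-orbit}---matches the paper's stated strategy, and your treatment of the second assertion is essentially sound. However, the first half contains a genuine gap at the step ``by Lemma \ref{100}, the set $P\cdot x\cap N_G(M)$ is also non-empty.'' Lemma \ref{100} only places $x$ in the much larger set $UN_G(M)\theta(U)$; it does not by itself produce a point of the orbit inside $N_G(M)$, and in fact the subsequent ``transport'' between $Mw$ and $N_G(M)$ cannot go through as you describe. When $P\cdot x$ is $M$-admissible one indeed has $L=M(w)=M$ and, by Lemma \ref{l-orbit}, a representative $x\in Mw$; but $Mw$ is \emph{not} contained in $N_G(M)$ in general. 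Admissibility gives $ww_n\in N_G(M)$, not $w\in N_G(M)$, and because $\theta(M)=M_{(n_k,\ldots,n_1)}$ need not equal $M$ one has $w^{-1}Mw=\theta(M)\neq M$ in general. So an element $x=mw$ of $Mw$ need not normalize $M$. What \emph{is} true is that $(Mw)J_n=M(ww_n)\subseteq N_G(M)$, i.e.\ the hermitian form $xJ_n$ attached to the chosen representative lands in $N_G(M)$---exactly the normalization used for the paper's natural orbit representative $x_{M,\tau}$, which is specified by prescribing $xJ_n$ rather than $x$. Your argument must therefore pass through the hermitian picture ($xJ_n\in N_G(M)$) rather than claim $x\in N_G(M)$ directly; as written, the set $Mw\cap N_G(M)$ you want to transport into can be empty.

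For the second assertion your conclusion is correct, but the detour through the stabilizer equation is unnecessary. When $L=M$ the two factors $M\cap w\theta(U)w^{-1}$ and $U\cap w\theta(M)w^{-1}$ of $Z$ are automatically trivial: the first because $M\subseteq w\theta(M)w^{-1}$ and $w\theta(M)w^{-1}\cap w\theta(U)w^{-1}=\{e\}$, the second because $U\cap M=\{e\}$. Hence $Z=U\cap w\theta(U)w^{-1}\subseteq U$, so $Z_x\subseteq U_x$, while $U_x\subseteq P_x=M_x\ltimes Z_x$ together with $U\cap M=\{e\}$ forces $U_x\subseteq Z_x$. Thus $Z_x=U_x$ and $P_x=M_x\ltimes U_x$ without any computation with the component $m'$.
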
 
For Levi subgroups $L$ of $M$, the $P$-orbits in $X$ can be analyzed in terms of specific $L$-admissible orbits.
 To be more precise, we correlate the $P$-orbit $P \cdot x$ with a particular $M(w)$-admissible orbit corresponding to $w = \mathcal{I}_M (P \cdot x)$. Thus, our initial step involves delineating the pertinent data for $M$-admissible orbits.
  
  Let 
 $S_2[\alpha] = \{\tau \in S_k \colon \tau^2 = e,~  n_{\tau(i)} = n_i, 1 \leq i \leq k\}$. The $M$-admissible orbits correspond one-to-one with $S_2[\alpha]$. 
 For the $M$-admissible $P$-orbit $P\cdot x$ in $P\backslash X$ corresponding to $\tau \in S_2[\alpha]$, we can choose a natural orbit representative $x = x_{M,\tau} \in P\cdot x \cap N_{G,\theta}(M)$ as follows: The matrix $x J_n$ has $J_{n_i}$ on its $(\tau(i),i)$-block and $0$ otherwise. It is easy to see that $M_x$ consists of elements $\diag(g_1, \ldots,g_k)$ such that 
 \begin{equation}\label{107}
 	\left\{ \begin{array}{rcl}
 		g_i J~ ^t\bar{g_i}  = J \hspace{0.8cm} & \mbox{for} & \tau(i) = i \\
 		g_{\tau(i)} = J~ ^t\bar{g_i}^{-1} J & \mbox{for}
 		& \tau(i) \not= i   
 	\end{array}. \right.
 \end{equation}
Therefore, $M_x$ is isomorphic to 
$$\prod_{i<\tau(i)} G_{n_i} \times \prod_{i=\tau(i)} H_{n_i}.$$ 
 It is easy to obtain the subsequent computation of modular characters through a routine calculation. Therefore, we omit the proof here. For $m = \diag(g_1,\ldots,g_k) \in M_x$, we have
    $$(\delta_{P_{x}} \delta_P^{-1/2}) (m) =  \prod_{i<\tau(i)}|\Ne_{\D/\Fe}(g_i)|_{\Fe}.$$

The following is an instance of admissible orbit prototypes.
\begin{exa} 
	Let $k = s + 2t$, and let $\tau(i) = i$ if $i = t+1,\ldots,t+s$ and $k+1-i$ otherwise. 
	Then $\alpha$ takes the form $(n_1,\ldots,n_t,n_{(t+1)},\ldots,n_{(t+s)},n_t,\ldots,n_1)$. Assume $$x = \diag(I_v,\ldots,J_{(n_{(t+1)},\ldots,n_{(t+s)})}J_y^{-1},I_v) = 
	\begin{pmatrix}
		&  &    J_v \\
		&  J_{(n_{(t+1)},\ldots,n_{(t+s)})} &  \\ 
		J_v   &  & 
	\end{pmatrix} J_n^{-1} \in X,
	$$
	where $J_{(n_{(t+1)},\ldots,n_{(t+s)})} = \diag(J_{n_{(t+1)}},\ldots,J_{n_{(t+s)}})$, $v = n_1 + \cdots+n_t$, and $y = n_{(t+1)} +\cdots+n_{(t+s)}$.
	Then $xJ_n$ is a hermitian matrix in $N_{G,\theta}(M)$ and hence $P\cdot x$ is $M$-admissible. 
	For each $m \in \mathbb{N}$, there is an involution on $G_{m}$ defined by $g \mapsto g^*= J_{m}~ ^t\bar{g}^{-1} J_{m}^{-1}$. We obtain that $M_x$ consists of the matrices $m = \diag(g_1,\ldots,g_t,g_{(t+1)},\ldots,g_{(t+s)}, g_t^*,\ldots,g_1^*)$, where $g_i \in G_{n_i}$, $1 \leq i \leq t$ and  $g_i \in H_{n_i}$, $t+1 \leq i \leq t+s$,
	and $$(\delta_{P_{x}} \delta_P^{-1/2}) (m) =  \prod_{i=1}^t|\Ne_{\D/\Fe}(g_i)|_{\Fe}.$$
\end{exa}
	\subsection{General orbits} \label{general orbits} Fix a $P$-orbit $P \cdot x$ in $P\backslash X$ such that $\mathcal{I}_M(P \cdot x) = w$. Let $L = M(w)$ be the standard Levi subgroup of $M$ corresponding to $w$. For $\beta = (\beta_1,\ldots,\beta_k)$ where $\beta_i = (m_{i,1},\ldots,m_{i,k_i})$ is a partition of $n_i$, let $L = M_{\beta}$. 
	We now consider the set of indices 
	$$\mathfrak{I} = \{(i,j) \colon 1 \leq i \leq k,~ 1 \leq j \leq k_i\}$$
	and define lexicographic order $(i_1,j_1)  \prec (i_2,j_2)$ if either $i_1 < i_2$ or $i_1 = i_2$ and $j_1 < j_2$. We further define the partial order $(i_1,j_1)  \ll (i_2,j_2)$ if $i_1 < i_2$.
	
	Note that by (\ref{101}) with Lemma \ref{l-orbit},  $P\cdot x \cap Lw$ is a unique $L$-admissible orbit. Moreover, for a standard parabolic subgroup $Q = P_{\beta}$ of $G$ with Levi subgroup $L$ and $x \in P\cdot x \cap Lw$, we obtain $M_x = L_x$ and $P_x = Q_x$. Thus, we can employ $\S$\ref{admissible orbit} by replacing $M$ with $L$.
	 
	By considering $S_2[\beta]$ as a set of involutions on $\mathfrak{I}$, we identify $(\mathfrak{I}, \prec)$ with the linearly ordered set $\{1,2,\ldots,|\mathfrak{I}|\}$.
	Assume $\tau \in S_2[\beta]$ the involution corresponding to $w = \mathcal{I}_L(P \cdot x)$. As $w \in  {_MW}_{\theta(M)}$, $\tau$ must satisfy 
	\begin{equation}\label{104}
		\tau(i,j+1) \ll \tau(i,j),~1 \leq i \leq k,~ 1 \leq j \leq k_i-1.
	\end{equation} 
due to additional restrictions. 
It follows that for each $i$, atmost one $j$ satisfies $\tau(i,j) = (i,j)$.
	\subsection{Orbit analysis for maximal parabolic subgroups}\label{43}  We now compute $H$-orbits in  $P_{k,n-k} \backslash G$, where $k \leq n-k$. Let $V$ be a $n$-dimensional Hermitian right $\D$-vector space having the basis $\{e_1,\ldots,e_n\}$ with $(e_i,e_{n-i+1}) = 1$ otherwise $0$. 
	Let $\mathcal{X}$ be the set of all $k$-dimensional $\D$-subspaces of $V$. As the action of $G$ on $V$ induces the transitive action on $\mathcal{X}$, so the stabilizer of an element $W = \langle e_1,e_2,\ldots,e_k\rangle$ in $\mathcal{X}$ is parabolic subgroup $P_{k,n-k}$ with $\mathcal{X} \simeq G/P_{k,n-k}$. 
	
	For $W_1, ~W_2 \in \mathcal{X}$, let $J_1$ and $J_2$ be the restrictions of $J$ to $W_1$ and $W_2$, respectively. It follows from Witt's theorem \cite{MVW} that $W_1$ and $W_2$ are conjugate by a symplectic endomorphism if and only if $\Dim \Rad J_1 = \Dim \Rad J_2$. Thus, $\mathcal{X} = \cup_{r=0}^{k} O_r$, where 
	$O_r = \{W \in \mathcal{X} | \Dim \Rad J|_{W} = r\}$.

	It is easy to see that the stabilizer of $T_k = \langle e_1,e_2,\ldots,e_k\rangle$ inside $O_k$ in $H$ is
	\begin{multline*}
		H_{k,k} = \Bigg\{
		\begin{pmatrix}
			a & b & c\\
			0& d & e \\ 
			0 & 0 & f 
		\end{pmatrix} \Bigg{|}~ a, f \in G_k,~d \in H_{n-2k},~ b \in M_{k \times (n-2k)}(\D),~ c\in M_{k \times k}(\D),~\\ e \in M_{(n-2k) \times k}(\D);~   a = J~^t \overline{f}^{-1}J,~ b~J~^t \overline{d} + a~J~^t \overline{e} = 0,~ \text{and}~ c~J~^t \overline{a} + b~J~^t \overline{b} + a~J~^t \overline{c} = 0 \Bigg\},
	\end{multline*}
	which is the parabolic subgroup of $H$ with Levi decomposition $H_{k,k} = M_{k,k} U_{k,k}$, where  
	$$M_{k,k} = \Bigg\{
	\begin{pmatrix}
		a & 0 & 0 \\
		0 & d & 0 \\ 
		0 & 0 & a^* 
	\end{pmatrix} \Bigg{|} a \in G_k,~d \in H_{n-2k}; a^* = J~^t \overline{a}^{-1}J \Bigg\} \simeq \Delta (G_k \times G_k) \times H_{n-2k},$$
\noindent
where $\Delta (G_k \times G_k) \simeq \{(a ,~ ^t\overline{a}^{-1}) ~|~ a \in G_k\}$ and $U_{k,k}$ is the unipotent subgroup inside $H$.
  
	Now, consider $T_r = \langle(e_1, e_2, \ldots, e_r,e_{r+1}+ e_{n-r},\ldots,e_{k-1}+e_{n-k+2}, e_k+e_{n-k+1})\rangle$ inside $O_r$, where $0 \leq r \leq (k-1)$.
	Clearly, the stabilizer $H_{k,r}$ of $T_r$ in $H$  takes the form $M_{k,r}U_{k,r}$, where  
	$$M_{k,r} \simeq \Delta (G_r \times G_r) \times H_{k-r} \times H_{n-k-r}$$
	with $\Delta (G_r \times G_r) \simeq \{(g ,~ ^t\overline{g}^{-1}) ~|~ g \in G_r\}$, naturally embeds into the Levi subgroup $G_k \times G_{n-k}$ corresponding to the parabolic subgroup $P_{k,n-k}$ in $G$.

	\begin{prop}\label{44} With the above notation, for $0 \leq r \leq k$,  
		the subgroup $H_{k,r}$ of $H$, which  stabilizes the subspaces $T_{r}$ of $V$ takes the form 
		$M_{k,r} \cdot U_{k,r},$
		where $M_{k,r} \simeq \Delta (G_r \times G_r) \times H_{k-r} \times H_{n-k-r}$ and $U_{k,r}$ is the unipotent subgroup inside $H$. 
		Furthermore, if $\delta_{k,r}$ is the modular character of $H_{k,r}$, then for $m = \diag (g, h_1, h_2,~^t\overline{g}^{-1}) \in M_{k,r}$, we have 
		$$\delta_{k,r}^{-1}~\delta_{P_{k,n-k}}^{1/2} (m) = |\Ne_{\D|\Fe}(g)|_{\Fe}^{-(n-2r+1)}.$$
	\end{prop}
	\section{Geometric lemma} 
	This section shows how certain discrete series representations lack a symplectic model. We further explore the symplectic model for certain induced representations, utilizing the insights from open and closed orbits within the filtration outlined in $\S$$\ref{geometric lemma}$.
	\subsection{Good orbits}\label{geometric lemma}
	Let $P = P_{\alpha} = M \ltimes U$ be a standard parabolic subgroup of $G$ associated with the partition $\alpha = (n_1,\ldots,n_k)$ of $n$. For $\sigma \in \Pi(M)$, let $\ind_P^G(\sigma)$ be a representation of $G$ with representation space $\mathcal{V}$. It follows from \cite[Theorem $5.2$]{BZ}, that there exists an ordering $y_1, \ldots,y_m$ of the double coset representatives, ensuring that 
	$$\mathcal{O}_i = \cup_{j=1}^i P y_j H$$
	is open in $G$ for all $i = 1,\ldots,m$. Define 
	$$\mathcal{V}_i = \{f \in \mathcal{V} \colon \Supp (f) \subseteq \mathcal{O}_i\}.$$
	Then 
	$$\{0\} =  \mathcal{V}_0 \subseteq \mathcal{V}_1 \subseteq
	\cdots \subseteq \mathcal{V}_m = \mathcal{V}$$ is an $H$-invariant filtration of $\mathcal{V}$. We now describe  the quotients $\mathcal{V}_i/ \mathcal{V}_{i-1}$ more explicitly.	
	
	Let $x_i = y_i \cdot e$ and $P_i = y_i^{-1}Py_i \cap H = y_i^{-1} P_{x_i}y_i$. Let $(\delta_P^{1/2}\sigma|_{P_{x_i}})^{y_i}$ be the representation of $P_i$ obtained from $\delta_P^{1/2}\sigma|_{P_{x_i}}$ by $y_i$-conjugation and $\ind_{P_i}^H$ be the non-normalized induction with compact support. Then 
	\begin{equation}\label{500}
		\mathcal{V}_i/ \mathcal{V}_{i-1} \simeq \ind_{P_i}^H((\delta_P^{1/2}\sigma|_{P_{x_i}})^{y_i}).
	\end{equation}
	\begin{defi}\label{Good orbit for}  We say that an orbit $P \cdot x_i$ is good orbit for $\sigma$ if 
		$\Hom_H(\mathcal{V}_i/ \mathcal{V}_{i-1}, \mathbb{C}) \not= 0.$
	\end{defi}
        For the rest of this subsection, fix  $y = y_i$,
        $x = x_i \in X$, $w = \mathcal{I}_M(P\cdot x)$ and $L = M(w)$. By Lemma \ref{l-orbit}, we can choose $y$ such that $x \in Lw$. Under this assumption, we define $Q$ as the standard parabolic subgroup $L \ltimes V$ of $G$, with $L$ as the standard Levi subgroup and $V$ as the unipotent radical. The subsequent proposition follows the same technique discussed in \cite[Proposition $4.1$]{offen2017parabolic}. 
	 \begin{prop}\label{50}
	 	With the above notation,  
	 	 $$\Hom_H(\ind_{y^{-1}P_xy}^H((\delta_P^{1/2}\sigma|_{P_{x}})^{y}), \mathbb{C}) \simeq \Hom_{L_x}(\re_{L,M}(\sigma), \delta_{Q_{x}} \delta_Q^{-1/2}).$$
	 	 Furthermore, the orbit $P \cdot x$ is good orbit for $\sigma$ if and only if
	 		$\Hom_{L_x}(\re_{L,M}(\sigma), \delta_{Q_{x}} \delta_Q^{-1/2}) \not= 0$.
	 \end{prop}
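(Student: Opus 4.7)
The plan is to reduce the left-hand Hom space to the Jacquet-module Hom on the right by a chain of standard manipulations, following Offen's proof of \cite[Proposition~4.1]{offen2017parabolic}. The first step is Frobenius reciprocity for non-normalized compact induction. Because $H = \Sp_n(\D)$ is unimodular, reciprocity together with the conjugation identification $y^{-1}P_x y \simeq P_x$ (which preserves modular characters) yields
$$\Hom_H\!\left(\ind_{y^{-1}P_xy}^H\!\left((\delta_P^{1/2}\sigma|_{P_x})^y\right), \mathbb{C}\right) \simeq \Hom_{P_x}\!\left(\delta_P^{1/2}\sigma|_{P_x}, \delta_{P_x}\right).$$

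The next step exploits the Levi decomposition $P_x = L_x \ltimes Z_x$ guaranteed by Lemma~\ref{l-orbit} (which also gives $P_x = Q_x$ since $x \in Lw$). Since $\sigma$ is viewed as a representation of $P$ that is trivial on $U$, its restriction to $Z_x$ factors through the projection $Z_x \twoheadrightarrow Z_x/(Z_x \cap U)$; the latter quotient embeds, via $\pr_M$, into $M \cap w\theta(U)w^{-1}$, which is precisely the unipotent radical of $Q \cap M$ in $M$. Moreover $\delta_{P_x}$ is trivial on the unipotent part $Z_x$. Hence every element of $\Hom_{P_x}(\delta_P^{1/2}\sigma|_{P_x}, \delta_{P_x})$ automatically factors through the space of $(M \cap w\theta(U)w^{-1})$-coinvariants of $\sigma$. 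These coinvariants are, up to the normalization $\delta_{Q \cap M}^{1/2}$ built into the definition of $\re_{L,M}$, the Jacquet module $\re_{L,M}(\sigma)$. We therefore obtain
$$\Hom_{P_x}(\delta_P^{1/2}\sigma|_{P_x}, \delta_{P_x}) \simeq \Hom_{L_x}\!\left(\re_{L,M}(\sigma), \delta_{P_x}\,\delta_P^{-1/2}\,\delta_{Q \cap M}^{-1/2}\right).$$

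The final step is a modular-character bookkeeping. From the nesting $Q \subseteq P$ and the decomposition of the unipotent radical $V_Q = (V_Q \cap M)\cdot U$, a direct Haar-measure computation gives the identity $\delta_Q|_L = \delta_P|_L \cdot \delta_{Q \cap M}|_L$. Combined with $\delta_{P_x} = \delta_{Q_x}$, the twist on the right collapses to exactly $\delta_{Q_x}\,\delta_Q^{-1/2}$, completing the claimed isomorphism. The second assertion is then immediate from Definition~\ref{Good orbit for}: $P\cdot x$ is good for $\sigma$ iff the left-hand Hom space is nonzero, which by the established isomorphism is equivalent to non-vanishing of $\Hom_{L_x}(\re_{L,M}(\sigma), \delta_{Q_x}\delta_Q^{-1/2})$. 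The main technical obstacle is the reconciliation of the four modular characters $\delta_P, \delta_Q, \delta_{P_x}, \delta_{Q_x}$ in the last step; the hypothesis $x \in Lw$ is essential throughout, as it is precisely what puts the Levi decomposition of Lemma~\ref{l-orbit} at our disposal and ensures $P_x = Q_x$, so that the calculation closes cleanly.
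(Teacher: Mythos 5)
Your argument is correct and follows exactly the approach the paper itself invokes: it refers the reader to Offen's \cite[Proposition~4.1]{offen2017parabolic} rather than spelling out a proof, and your chain (Frobenius reciprocity for non-normalized compact induction, factoring through the Jacquet module, then the modular-character identity $\delta_Q|_L = \delta_P|_L\,\delta_{Q\cap M}|_L$ together with $P_x = Q_x$) is precisely that argument. One small imprecision worth tightening: in the middle step you only assert that $Z_x/(Z_x\cap U)$ \emph{embeds} into $M\cap w\theta(U)w^{-1}$, but the passage to the $(M\cap w\theta(U)w^{-1})$-coinvariants needs this map to be \emph{onto}, i.e.\ $\pr_M(Z_x) = M\cap w\theta(U)w^{-1}$; this is exactly what Lemma~\ref{l-orbit} supplies (note that its statement contains a typo, reading $M\cap w\theta(M)w^{-1}$ where $M\cap w\theta(U)w^{-1}$ is meant — the former is the Levi $L$, not unipotent).
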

 \begin{cor}\label{42}
 	With the notation in $\S$\ref{43}, let $\lambda_1 \in \Irr(G_k)$ and $\lambda_2 \in \Irr(G_{n-k})$. Let $\lambda$ be  the representation of $P = P_{k,n-k}$ obtained by extending $\lambda_1 \otimes \lambda_2$ trivially to $P_{k,n-k}$. Then
 	$$\Hom_{H}(\ind_{H_{k,r}}^{H}(\delta_P^{1/2}\lambda) , \mathbb{C}) \simeq \Hom_{M_{k,r}}(\re_{(r,k-r),(k)}(\lambda_1) \otimes \re_{(n-k-r,r),(n-k)}(\lambda_2), \nu).$$
 \end{cor}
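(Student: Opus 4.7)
The plan is to apply Proposition~\ref{50} directly, taking $\sigma = \lambda$ and the $P$-orbit $P\cdot x \subset X = G/H$ that corresponds, under the standard bijection $P\backslash G/H \leftrightarrow H\backslash G/P$, to the $H$-orbit $O_r$ of $T_r$ in the Grassmannian $\mathcal{X} \simeq G/P$ described in $\S$\ref{43}. For a double-coset representative $y$ with $x = y\cdot e$, the stabilizer satisfies $y^{-1}P_xy = H_{k,r}$, so
$$\ind_{H_{k,r}}^{\Sp_n(\D)}\bigl(\delta_P^{1/2}\lambda\bigr) \simeq \ind_{y^{-1}P_xy}^{H}\bigl((\delta_P^{1/2}\lambda|_{P_x})^{y}\bigr),$$
placing us in the setup of Proposition~\ref{50}. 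It then remains to identify the three ingredients $L$, $L_x$, and $\delta_{Q_x}\delta_Q^{-1/2}$ on the right-hand side.

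For the second step I would compute $w = \mathcal{I}_M(P\cdot x)$ and the refined Levi $L = M(w)$. Guided by the block structure of $H_{k,r}$ in $\S$\ref{43}, the natural candidate is the partition $\beta = ((r,k-r),(n-k-r,r))$ of $(k,n-k)$, giving
$$L = \GL_r(\D) \times \GL_{k-r}(\D) \times \GL_{n-k-r}(\D) \times \GL_r(\D),$$
with associated involution $\tau \in S_2[\beta]$ swapping the two size-$r$ blocks $(1,1) \leftrightarrow (2,2)$ and fixing $(1,2)$ and $(2,1)$. One checks that $\tau$ satisfies the order condition~(\ref{104}), and the construction of $L$-admissible orbit representatives in $\S$\ref{admissible orbit} (applied with $L$ in place of $M$) confirms $P\cdot x \cap Lw \neq \emptyset$ for this choice. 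By the product formula for the Jacquet functor,
$$\re_{L,M}(\lambda) = \re_{(r,k-r),(k)}(\lambda_1) \otimes \re_{(n-k-r,r),(n-k)}(\lambda_2),$$
matching the Jacquet module in the target of the corollary.

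In the third step I would compute the remaining data via $\S$\ref{admissible orbit} applied to $L$ and $Q = P_{\beta}$ in place of $M$ and $P$. The stabilizer description (\ref{107}) gives
$$L_x \simeq \GL_r(\D) \times \Sp_{k-r}(\D) \times \Sp_{n-k-r}(\D) \simeq M_{k,r},$$
and the modular character formula from $\S$\ref{admissible orbit} yields
$$(\delta_{Q_x}\delta_Q^{-1/2})\bigl(\diag(g_1, h_1, h_2, \,^{t}\bar{g}_1^{-1})\bigr) = \prod_{i<\tau(i)}|\Ne_{\D/\Ke}(g_i)| = |\Ne_{\D/\Ke}(g_1)| = \nu(g_1),$$
since only the index $i=1$ with $\tau(1)=4$ contributes while the symplectic factors have unit reduced norm. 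Substituting into Proposition~\ref{50} delivers the stated isomorphism. The main obstacle is the bookkeeping in the second step: correctly matching the Grassmannian orbit $O_r$ with the Weyl element $w$ and its involution $\tau$, and verifying the order condition~(\ref{104}); once this combinatorial identification is nailed down, the character computation and application of Proposition~\ref{50} are mechanical.
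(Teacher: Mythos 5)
Your argument is sound, but it takes a different route through the paper's machinery than the authors do. You invoke Proposition~\ref{50} as a black box and reduce everything to identifying the combinatorial data: the Weyl element $w$, the refined Levi $L = M(w)$ with $\beta=((r,k-r),(n-k-r,r))$, the involution $\tau$ on $\mathfrak{I}$ swapping the two size-$r$ blocks, the stabilizer $L_x \simeq M_{k,r}$, and the character $\delta_{Q_x}\delta_Q^{-1/2} = \nu$ via the admissible-orbit formula of $\S$\ref{admissible orbit}. Your checks of the order condition~(\ref{104}), the form of $L_x$, and the modular character are all correct. The paper, by contrast, does not invoke Proposition~\ref{50} at all: it works directly with the stabilizer $H_{k,r}=M_{k,r}U_{k,r}$ described in $\S$\ref{43}, applies unnormalized Frobenius reciprocity, substitutes the explicit value $\delta_{k,r}^{-1}\delta_P^{1/2}=\nu^{-(n/4-r/2+1)}$ from Corollary~\ref{45}, uses Jacquet-module adjunction on the $\GL$ side, and then collects the remaining modular factors $\delta_{U_1}^{1/2}$, $\delta_{U_2}^{1/2}$ by hand. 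The two arguments are essentially equivalent -- Proposition~\ref{50} is itself proved by the same Frobenius/Jacquet calculation, following Offen -- but your version is more systematic (it exercises the general framework the paper sets up and shifts all the bookkeeping onto identifying $(w,L,\tau)$), while the paper's is more concrete and self-contained (it avoids the Weyl/involution combinatorics entirely by exploiting the explicit matrix description of $H_{k,r}$). Either is acceptable; yours trades the explicit modular-character arithmetic of Corollary~\ref{45} for the need to correctly match the orbit $O_r$ with its twisted involution, a step you flag as the main bookkeeping burden and which you carry out correctly.
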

 \begin{proof}
 	Let $U = U_1 \times U_2$, where $U_1 = U_{r,k-r}$ and $U_2 = U_{n-k-r,r}$ denote the unipotent subgroups of $G_k$ and $G_{n-k}$, respectively. By Frobenius reciprocity, we obtain
 	\begin{equation}\label{200}
 	\Hom_{H}(\ind_{H_{k,r}}^{H}(\delta_P^{1/2}\lambda) , \mathbb{C}) \simeq \Hom_{H_{k,r}} (\delta_{k,r}^{-1}~\delta_P^{1/2} \lambda_1 \otimes \lambda_2, \mathbb{C}).
 	\end{equation}
 	Clearly, the space on the right hand side of $(\ref{200})$ is equivalent to $\Hom_{M_{k,r}U}(\nu^{-(n-2r+1)}\lambda_1 \otimes \lambda_2, \mathbb{C})$.
 	With the normalized Jacquet functor being left adjoint to normalized induction \cite[Proposition $1.9$(b)]{BZ}, we get
 	\begin{equation}\label{103}
 		\Hom_{M_{k,r}U}(\nu^{-(n-2r+1)}\lambda_1 \otimes \lambda_2, \mathbb{C}) \simeq \Hom_{M_{k,r}}(\re_{U}(\nu^{-(n-2r+1)}\lambda_1 \otimes \lambda_2), \delta_U^{-1/2}).
 	\end{equation}
 The space on the right hand side of $(\ref{103})$ equals
 	$\Hom_{M_{k,r}}(\nu^{-(n-2r+1)} \delta_{U_1}^{1/2} \re_{U_1}(\lambda_1) \otimes \delta_{U_2}^{1/2} \re_{U_2}(\lambda_2), \mathbb{C})$.
 	Now, for $h_1 \in H_{k-r}$ and $h_2 \in H_{n-k-r}$, we have
 	$$\delta_{U_1}
 	\begin{pmatrix}
 		g & *  \\
 		0 &  h_1  
 	\end{pmatrix}
 	= |\Ne_{\D|\Fe}(g)|_{\Fe}^{2(k-r)},~ \delta_{U_2}
 	\begin{pmatrix}
 		h_2 & *  \\
 		0 &  ^t\overline{g}^{-1}  
 	\end{pmatrix}
 	= |\Ne_{\D|\Fe}(g)|_{\Fe}^{2(n-k-r)}.$$
 	Hence, it implies
 	$$\Hom_{H}(\ind_{H_{k,r}}^{H}(\delta_P^{1/2}\lambda) , \mathbb{C}) \simeq \Hom_{M_{k,r}}(\re_{(r,k-r),(k)}(\lambda_1) \otimes \re_{(n-k-r,r),(n-k)}(\lambda_2), \nu).$$
 \end{proof}
 
    We further analyze the condition $\Hom_{L_x}(\re_{L,M}(\sigma), \delta_Q^{-1/2}\delta_{Q_{x}}) \not= 0$ for the representation \linebreak
    $\re_{L,M}(\sigma) \in \Pi(L)$.
    Considering the notation provided in $\S$\ref{general orbits}, we define $\re_{L,M}(\sigma)$ as $\rho = \otimes_{i \in (\mathfrak{I}, \prec)}\rho_i$, and we introduce $\tau \in S_2[\beta]$ as the involution acting on $\mathfrak{I}$, linked to $w = \mathcal{I}_M(P\cdot x)$ through $\S$\ref{admissible orbit}, with $L$ substituting $M$.
    Under the assumption $\rho_i \in \Irr(G_{n_i})$ whenever $\tau(i) \not= i$, $(\ref{107})$ leads us to the following implication.
	\begin{prop} \label{distinction by closed orbit}
		The space $\Hom_{L_x}(\rho, \delta_Q^{-1/2}\delta_{Q_{x}})$ is non-zero if and only if $\rho_i \simeq \nu \rho_{\tau(i)}$ for all $i \in \mathfrak{I}$ such that $i \prec \tau(i)$ and $\rho_i$ is $H_{n_i}$-distinguished if $\tau(i) = i$. 
		In particular, if  $P\cdot x$ is good orbit for $\sigma$,  then there is an irreducible component $\rho = \otimes_{i \in (\mathfrak{I}, \prec)}\rho_i \in \Irr(L)$ of $\re_{L,M}(\sigma)$ satisfying 
		$\rho_i \simeq \nu \rho_{\tau(i)}$ for all $i \in \mathfrak{I}$ such that $i \prec \tau(i)$ and $\rho_i$ is $H_{n_i}$-distinguished if $\tau(i) = i$. 
	\end{prop}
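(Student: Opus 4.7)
The strategy is to use the explicit description of $L_x$ available from equation \eqref{107} (applied with $L$ in place of $M$) and the accompanying modular character formula to split the Hom space as a product indexed by the orbits of $\tau$ on $\mathfrak{I}$, and then analyze each factor separately. First I would record that $L_x$ consists of tuples $\diag(g_{(i,j)})_{(i,j)\in\mathfrak{I}}$ with $g_{(i,j)}\in\Sp_{m_{i,j}}(\D)$ when $\tau(i,j)=(i,j)$ and $g_{\tau(i,j)}=g_{(i,j)}^{*}:=J\,{}^t\bar{g}_{(i,j)}^{-1}J$ when $\tau(i,j)\neq(i,j)$, so that
\[
L_x\ \simeq\ \prod_{(i,j)\prec\tau(i,j)}\GL_{m_{i,j}}(\D)\ \times\ \prod_{\tau(i,j)=(i,j)}\Sp_{m_{i,j}}(\D).
\]
From the character formula recalled right after \eqref{107}, the restriction of $\delta_{Q_x}\delta_Q^{-1/2}$ to $L_x$ is $\nu=|\Nrd(\cdot)|$ on each off-diagonal $\GL$-factor and trivial on every symplectic factor.

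Next I would decompose $\rho|_{L_x}=\bigotimes_{(i,j)}\rho_{(i,j)}|_{L_x}$ according to the orbits of $\tau$. For a two-cycle $\{(i,j),\tau(i,j)\}$ with $(i,j)\prec\tau(i,j)$, the corresponding factor is the representation of $\GL_{m_{i,j}}(\D)$ given by $g\mapsto \rho_{(i,j)}(g)\otimes\rho_{\tau(i,j)}(g^{*})$; for a fixed point it is $\rho_{(i,j)}|_{\Sp_{m_{i,j}}(\D)}$. The Hom space therefore splits as
\[
\Hom_{L_x}(\rho,\delta_{Q_x}\delta_Q^{-1/2})\ \simeq\ \bigotimes_{(i,j)\prec\tau(i,j)}\Hom_{\GL_{m_{i,j}}(\D)}\!\bigl(\rho_{(i,j)}\otimes\rho_{\tau(i,j)}^{*},\nu\bigr)\ \otimes\!\!\bigotimes_{\tau(i,j)=(i,j)}\!\!\Hom_{\Sp_{m_{i,j}}(\D)}(\rho_{(i,j)},\mathbb{C}),
\]
where $\rho^{*}(g):=\rho(g^{*})$. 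The fixed-point factors are nonzero exactly when $\rho_{(i,j)}$ is $\Sp_{m_{i,j}}(\D)$-distinguished, by definition.

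For the two-cycle factors I would invoke the Gelfand--Kazhdan type fact that for an irreducible representation $\pi$ of $\GL_m(\D)$ one has $\pi^{*}\simeq\widetilde{\pi}$ (the involution $g\mapsto J\,{}^t\bar g^{-1}J$ realises the contragredient up to inner conjugation). Using the standard adjunction $\Hom_{\GL}(\pi_1\otimes\widetilde{\pi}_2,\nu)\simeq\Hom_{\GL}(\pi_1,\nu\otimes\pi_2)$ together with the irreducibility hypothesis $\rho_{(i,j)},\rho_{\tau(i,j)}\in\Irr$ and Schur's lemma, the factor is nonzero iff $\rho_{(i,j)}\simeq\nu\rho_{\tau(i,j)}$. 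Combining the factors yields the stated equivalence.

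For the moreover statement, suppose $P\cdot x$ is a good orbit for $\sigma$. By Proposition \ref{50}, $\Hom_{L_x}(\re_{L,M}(\sigma),\delta_{Q_x}\delta_Q^{-1/2})\neq 0$. Since $\sigma\in\Pi(M)$ is of finite length, so is $\re_{L,M}(\sigma)\in\Pi(L)$, hence there exists an irreducible quotient $\rho\in\Irr(L)$ with nonzero Hom into the character $\delta_{Q_x}\delta_Q^{-1/2}$. Such a $\rho$ automatically factors as a tensor product $\otimes_{(i,j)}\rho_{(i,j)}$ of irreducible representations of the $\GL_{m_{i,j}}(\D)$, so the first part of the proposition applies and produces the required conditions on the $\rho_{(i,j)}$. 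I expect the principal technical point to be the identification $\pi^{*}\simeq\widetilde{\pi}$ over the quaternion algebra, since everything else is bookkeeping; once that input is in place the argument reduces to Schur's lemma and the modular character computation already recorded in the text.
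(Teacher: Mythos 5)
The paper states this proposition with no written proof, deriving it directly from $(\ref{107})$ and the adjacent modular character formula; your argument reconstructs exactly that implicit derivation — factoring $L_x$ and the character $\delta_{Q_x}\delta_Q^{-1/2}$ over the $\tau$-orbits, using the MVW identification $\pi^{*}\simeq\widetilde{\pi}$ on $\GL_m(\D)$ for the two-cycle factors, and concluding by Schur's lemma — so the approach is the intended one. One imprecision in the ``moreover'' step: from $\Hom_{L_x}(\re_{L,M}(\sigma),\delta_{Q_x}\delta_Q^{-1/2})\neq 0$ and finite length you may conclude the existence of an irreducible \emph{subquotient} of $\re_{L,M}(\sigma)$ with nonzero $\Hom$ into the character (run the left-exact functor $\Hom_{L_x}(-,\chi)$ down a composition series, the same argument that underlies Lemma~\ref{subquotient}), but not necessarily an irreducible \emph{quotient}; replacing ``quotient'' by ``subquotient'' is what the paper means by ``irreducible component,'' and the rest of your argument then goes through unchanged.
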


	The version of the geometric lemma explored above is frequently utilized to demonstrate the non-distinguished nature of specific induced representations. By Lemma \ref{subquotient}, its foundation rests upon the following observation.
	 
	\begin{lemma}\label{47} Let $P = M \ltimes U$ be a standard parabolic subgroup of $G$ and let $\sigma \in \Pi(M)$. If $\ind_{P}^G(\sigma)$ admits a symplectic model, then a $P$-orbit in $X$ serves as a good orbit for $\sigma$. 
	\end{lemma}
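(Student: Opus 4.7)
The plan is to exploit the finite $H$-invariant filtration
\[
\{0\} = \mathcal{V}_0 \subseteq \mathcal{V}_1 \subseteq \cdots \subseteq \mathcal{V}_m = \mathcal{V}
\]
of $\mathcal{V} = \ind_P^G(\sigma)$ recalled in Section \ref{geometric lemma}, whose successive quotients are indexed by the $P$-orbits $P\cdot x_i$ in $X$, namely
\[
\mathcal{V}_i/\mathcal{V}_{i-1} \simeq \ind_{P_i}^H\bigl((\delta_P^{1/2}\sigma|_{P_{x_i}})^{y_i}\bigr).
\]
First I would fix a non-zero element $\ell \in \Hom_H(\ind_P^G(\sigma), \mathbb{C})$, which exists by hypothesis, and consider its restrictions $\ell|_{\mathcal{V}_i}$ to each step of the filtration. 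Since $\ell$ is non-zero on $\mathcal{V}_m = \mathcal{V}$ and zero on $\mathcal{V}_0 = 0$, there is a smallest index $i \in \{1,\ldots,m\}$ for which $\ell|_{\mathcal{V}_i} \neq 0$.

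By the minimality of $i$, the functional $\ell|_{\mathcal{V}_i}$ vanishes on the $H$-stable subspace $\mathcal{V}_{i-1}$, and therefore descends to a non-zero $H$-invariant linear functional on the quotient $\mathcal{V}_i/\mathcal{V}_{i-1}$. In view of the identification of this quotient with $\ind_{P_i}^H((\delta_P^{1/2}\sigma|_{P_{x_i}})^{y_i})$, this exactly means
\[
\Hom_H\bigl(\mathcal{V}_i/\mathcal{V}_{i-1}, \mathbb{C}\bigr) \neq 0,
\]
which by Definition \ref{Good orbit for} says that the orbit $P\cdot x_i$ is a good orbit for $\sigma$.

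There is no genuine obstacle here beyond checking that the pigeon-hole step is legitimate: the filtration is finite, each $\mathcal{V}_j$ is $H$-stable (guaranteed by the Bernstein--Zelevinsky construction, since each $\mathcal{O}_j$ is $H$-invariant from the right), and passing from $\ell|_{\mathcal{V}_i}$ to the induced functional on $\mathcal{V}_i/\mathcal{V}_{i-1}$ preserves $H$-invariance because $\mathcal{V}_{i-1}$ is $H$-stable. Thus the argument is a direct pigeonhole on the geometric lemma filtration, and no further analysis of the groups $L_x$ or the Jacquet modules (as in Proposition \ref{distinction by closed orbit}) is required for this statement.
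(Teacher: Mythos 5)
Your proof is correct and is essentially the argument the paper has in mind: the paper omits the proof and simply points to Lemma \ref{subquotient} as the underlying principle, which is the same pigeonhole on a finite $H$-stable filtration that you carry out explicitly on $\{0\}=\mathcal{V}_0\subseteq\cdots\subseteq\mathcal{V}_m=\mathcal{V}$. Your side remarks (that each $\mathcal{V}_j$ is $H$-stable because $\mathcal{O}_j$ is right-$H$-invariant, and that no analysis of $L_x$ or Jacquet modules is needed for this bare statement) are both accurate.
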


\subsection{Non-distinguished nature of $\ST_n(\chi)$ and $\ST_n(\mu)$} We now discuss the non-distinguished nature in two specific instances.
\begin{proof}[\upshape\textbf{Proof of Proposition $\ref{301}$}]
By Lemma $\ref{7}$,  it is easy to see that $\ST_n(\chi)$ sits in the following exact sequence 
$$0 \rightarrow Q(\Delta_1,\Delta_2) \rightarrow \pi = Q(\Delta_1) \times Q(\Delta_2) \rightarrow \ST_n(\chi) \rightarrow 0,$$
where $\Delta_1 = [{\vert \cdot \vert}_{\D}^{-(\frac{n-1}{2})}\chi, {\vert \cdot \vert}_{\D}^{-(\frac{n-3}{2})}\chi]$ and $\Delta_2 = [{\vert \cdot \vert}_{\D}^{-(\frac{n-5}{2})}\chi, {\vert \cdot \vert}_{\D}^{(\frac{n-1}{2})}\chi$]. As $\ST_n(\chi)$ appears as a quotient of $\pi$, the absence of distinction in $\pi$ correlates with the non-distinguished nature of $\ST_n(\chi)$. We now analyze the necessary conditions under which $\pi$ exhibits a symplectic model.
Applying Mackey theory, we find that $\pi{\vert}_{H}$
decomposes into three subquotients given by
$$\ind_{H_{2,r}}^{H}[(\delta_{P_{2,n-2}}^{1/2}Q(\Delta_1) \otimes Q(\Delta_2)){\vert}_{H_{2,r}}],~ r = 0,1,2.$$

By analyzing these three subquotients using Corollary $\ref{42}$, it is easy to see that the necessary conditions for $\pi$ to have a symplectic model are the following: 
\begin{itemize}
	\item[\upshape(1)] Both $Q(\Delta_1)$ and $Q(\Delta_2)$ must admit a symplectic model,
	\item[\upshape(2)] $\Hom_{M_{2,1}}(\re_{(1,1),(2)}(Q(\Delta_1)) \otimes \re_{(n-3,1),(n-2)}(Q(\Delta_2)), \nu) \not= 0$,
	\item[\upshape(3)] $\Hom_{M_{2,2}}(Q(\Delta_1) \otimes \re_{(n-4,2),(n-2)}(Q(\Delta_2)), \nu) \not= 0$.
\end{itemize} 
\noindent
It is evident from these conditions that $\pi$ does not exhibit a symplectic model. Therefore, $\ST_n(\chi)$ also does not exhibit a symplectic model.
\end{proof} 
\begin{rem}
	 For odd $n$, the non-distinguishness of $\ST_n(\chi)$ can also be seen by 
	{\upshape\cite[Theorem 6.6]{Verma}} if we grant ourselves that globalization theorem in {\upshape\cite{prasad2008generalised}} holds for discrete series representations. 
\end{rem}
We also demonstrate that for a segment $\Delta$, a certain class of  representations of the form $Q(\Delta)$ does not have a symplectic model, which we prove in the following lemma. 
\begin{lemma}\label{55}
	For $a < b$, let $\Delta = [a, b]_{\rho}$ be a segment in $\Sgm$, where 
	$\rho \in \Cusp(G_k)$ does not have a  symplectic model. Then $Q(\Delta)$ does not exhibit a symplectic model. 
\end{lemma}
\begin{proof} Let $n = k(b-a+1)$.
	Since $Q(\Delta)$ is realized as a quotient of a representation $\nu_{\rho}^a \rho \times Q([a+1, b]_{\rho})$ of $G$, its distinction is associated with the distinguished nature of a representation of the form $\pi = \Ind_{P_{k,n-k}}^{G}(\rho_1 \otimes \rho_2)$, where $\rho_1 \in \Irr(G_k)$ and $\rho_2 \in \Irr(G_{n-k})$.
	Utilizing Mackey theory, we determine that the restriction $\pi{\vert}_{H}$ breaks down into subquotients expressed as
	$$\ind_{H_{k,r}}^{H}[(\delta_{P_{k,n-k}}^{1/2}\rho_1 \otimes \rho_2){\vert}_{H_{k,r}}],~ 0 \leq r \leq k.$$
	
	By examining these subquotients through Corollary $\ref{42}$, it becomes clear that for $\pi$ to possess a symplectic model, the following conditions must hold:
	\begin{itemize}
		\item either both $\rho_1$ and $\rho_2$ must have a symplectic model or 
		\item $\Hom_{M_{k,r}}(\re_{(r,k-r),(k)}(\rho_1) \otimes \re_{(n-k-r,r),(n-k)}(\rho_2), \nu) \not= 0$, where $r = 1,2, \ldots, k$.
	\end{itemize}
	 \noindent
	These conditions indicate that $\nu_{\rho}^a \rho \times Q([a+1, b]_{\rho})$ lacks a symplectic model, thereby implying that $Q(\Delta)$ does not have a symplectic model by Lemma \ref{distinction by quotient}.
\end{proof}
\begin{cor}
	The generalized Steinberg representations $\ST_n(\mu)$ of $G_n$ do not exhibit a symplectic model.
\end{cor}
	\subsection{Distinction by open and closed orbits} 
	The converse of the Lemma $\ref{47}$ may not hold in all instances. Although, the geometric lemma suggests distinction in two specific instances; hence the proof discussed below delineates the  hereditary nature of the symplectic model, where this distinction is related to the open orbit, following the notation from $\S$$\ref{geometric lemma}$.
	\begin{proof}[\upshape\textbf{Proof of Theorem $\ref{48}$}] 
        For a partition $\alpha = (n_1,\ldots,n_k)$ of $n$,
		assume that  $z = J_{(n_1,\ldots,n_k)}J_n^{-1}$. Therefore, $P \cap \theta_z(P) = M$, the orbit $P \cdot z$ is open in $X$, and 
		$$M_{z} = \{\diag(h_1,\ldots,h_k) \colon h_i \in H_{n_i},~ 1 \leq i \leq k \}.$$
		Since $\sigma_i$ admits a symplectic model for each $i$, there exist a linear form $f_i$ which lies in $\Hom_{H_{n_i}}(\sigma_i, \mathbb{C})$. 
		
		Assume that $\sigma = \sigma_1 \otimes \cdots \otimes \sigma_k$ and $f = f_1 \otimes \cdots \otimes f_k \in \Hom_{M_{z}}(\sigma, \mathbb{C})$. Then  for $\eta \in G$ with $\eta \cdot I_{n} = z$, the integral 
		$$\widetilde{f}(\phi) = \int_{H \cap \eta^{-1}M_{z} \eta \backslash H} \phi(\eta h) dh$$
		defines a non-zero linear form $\widetilde{f} \in \Hom_H(\mathcal{V}_1, \mathbb{C})$. There is no assurance  of linear form $\widetilde{f}$ for the extension to an $H$-invariant linear form on $\ind_{P}^G(\sigma)$, 
		but it lies in a holomorphic family with the ability to extend meromorphically.
		For $\mu = (\mu_1,\ldots,\mu_k) \in \mathbb{C}^k$, we define the representation $\sigma[\mu]$ acting on the space of $\sigma$ by $$\sigma[\mu](\diag(g_1,\ldots,g_k)) = |\Ne_{\D/\Fe}(g_1)|_{\Fe}^{\mu_1}\cdots|\Ne_{\D/\Fe}(g_k)|_{\Fe}^{\mu_k} \sigma(\diag(g_1,\ldots,g_k)).$$
		Then Theorem $2.8$ in \cite{BD08} assures the presence of a non-zero meromorphic function \linebreak$(\mu \mapsto f_{\mu}) \colon \mathbb{C}^k \rightarrow \mathcal{V}^*$ that satisfies $f_{\mu} \in \Hom_H(\ind_P^G(\sigma[\mu]), \mathbb{C})$ whenever holomorphic at $\mu$. Therefore, when evaluating the leading term at $\mu = 0$ along a complex line passing through zero in a generic direction, it furnishes a nontrivial element of $\Hom_H(\ind_P^G(\sigma), \mathbb{C})$.  
	\end{proof}

    In the context of the closed orbit $P\cdot I_n$, we have the following theorem by the geometric lemma, a consequence of Proposition \ref{distinction by closed orbit}.
	
	\begin{theorem}\label{49}
		Let $\sigma_i \in \Irr$ for $1 \leq i \leq k$, and  $\rho_i \in \Pi$ for $1 \leq i \leq t$ (allow the case $t=0$) with each $\rho_i$ admitting a symplectic model. Then 
		$\nu \sigma_1 \times \cdots \times \nu \sigma_k \times \rho_1 \times \cdots \times \rho_t \times \sigma_k \times \cdots \times \sigma_1$
		also admits a symplectic model.
	\end{theorem}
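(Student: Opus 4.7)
The plan is to apply the geometric lemma at the closed orbit $P\cdot I_n$, after first collapsing the central $\rho_i$'s into a single distinguished block. Write $m':=m_1+\cdots+m_t$ where $m_i$ is the block size of $\rho_i$. By Theorem \ref{48}, the product $\rho:=\rho_1\times\cdots\times\rho_t\in\Pi(G_{m'})$ itself admits a symplectic model. By transitivity of parabolic induction,
\[
\pi\simeq\ind_P^G\!\bigl(\nu\sigma_1\otimes\cdots\otimes\nu\sigma_k\otimes\rho\otimes\sigma_k\otimes\cdots\otimes\sigma_1\bigr),
\]
where $P=M\ltimes U$ is the standard parabolic whose Levi corresponds to the symmetric partition $\alpha=(n_1,\ldots,n_k,m',n_k,\ldots,n_1)$ with $n_i$ the block size of $\sigma_i$. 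Because $\alpha$ is invariant under the block reversal induced by $\theta$, we have $\theta(M)=M$, so $P\cdot I_n$ is $M$-admissible with $\mathcal{I}_M(P\cdot I_n)=e$ and $L=M$.

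Under the bijection of $\S\ref{admissible orbit}$, the orbit $P\cdot I_n$ corresponds to the involution $\tau\in S_2[\alpha]$ given by the mirror pairing $\tau(j)=2k+2-j$ for $j=1,\ldots,k$ and $\tau(k+1)=k+1$; thus $\tau$ pairs each $\nu\sigma_j$ with $\sigma_j$ and fixes the central block carrying $\rho$. I order the $P$-orbits so that $P\cdot I_n$ is added last in the filtration $\{0\}=\mathcal{V}_0\subset\cdots\subset\mathcal{V}_N=\mathcal{V}$, so that $\mathcal{V}_N/\mathcal{V}_{N-1}$ realises its geometric contribution. Proposition \ref{50} then identifies
\[
\Hom_H(\mathcal{V}_N/\mathcal{V}_{N-1},\mathbb{C})\simeq\Hom_{M_{I_n}}\!\bigl(\sigma,\,\delta_{P_{I_n}}\delta_P^{-1/2}\bigr),
\]
where $\sigma=\nu\sigma_1\otimes\cdots\otimes\nu\sigma_k\otimes\rho\otimes\sigma_k\otimes\cdots\otimes\sigma_1$, and Proposition \ref{distinction by closed orbit} reduces the non-vanishing of the right-hand side to two checks: for each pair $\nu\sigma_j\leftrightarrow\sigma_j$ one needs $\nu\sigma_j\simeq\nu\cdot\sigma_j$, which is tautological, and at the fixed central position one needs $\rho$ to be $\Sp_{m'}(\D)$-distinguished, which is the conclusion of the first paragraph. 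Composing a non-zero element of $\Hom_H(\mathcal{V}_N/\mathcal{V}_{N-1},\mathbb{C})$ with the quotient map $\mathcal{V}\twoheadrightarrow\mathcal{V}_N/\mathcal{V}_{N-1}$ yields the desired element of $\Hom_H(\pi,\mathbb{C})$.

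The main technical point is confirming that $P\cdot I_n$ indeed contributes at the \emph{top} of the filtration, so that invariant functionals on $\mathcal{V}_N/\mathcal{V}_{N-1}$ lift to $\pi$ rather than merely appearing on a proper subrepresentation; this amounts to the closedness of $P\cdot I_n$ in $P\backslash X$ for the $\theta$-stable Levi $M$, which follows from the fact that the stabiliser $P_{I_n}=P\cap H$ is maximal among the $P$-stabilisers of points in $X$. A secondary subtlety is that $\rho$ may be reducible, so only the \emph{sufficiency} direction of Proposition \ref{distinction by closed orbit} is needed at the central fixed point; that direction requires only the existence of a non-zero $\Sp_{m'}(\D)$-invariant form on $\rho$ together with the tautological pair conditions, both of which are in hand. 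In contrast to the open-orbit proof of Theorem \ref{48}, no meromorphic continuation of functionals is required here.
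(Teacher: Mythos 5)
Your proposal is correct and follows essentially the same route as the paper: first use Theorem \ref{48} to combine the $\rho_i$'s into a single symplectically distinguished block $\rho$, then invoke the geometric lemma at the closed orbit $P\cdot I_n$ (via Propositions \ref{50} and \ref{distinction by closed orbit}), and conclude because the closed orbit sits at the top of the Bernstein--Zelevinsky filtration so the invariant functional pulls back to $\ind_P^G(\sigma)$. Your explicit remark that only the sufficiency direction of Proposition \ref{distinction by closed orbit} is needed at the central fixed block (where $\rho$ may be reducible) is a useful clarification of a point the paper leaves implicit, but it is not a different argument.
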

	\begin{proof}
		Since each $\rho_i$ admits a symplectic model, $\rho = \rho_1 \times \cdots \times \rho_t$ also admits a symplectic model by Theorem \ref{48}. Let $P$ be a parabolic subgroup of $G$, with Levi subgroup $M$ associated with the representation $\sigma = \nu \sigma_1 \otimes \cdots \otimes \nu \sigma_k \otimes \rho \otimes \sigma_k \otimes \cdots \otimes \sigma_1$. Then $\ind_P^{G}(\sigma) \in \Pi(G)$ and closed orbit $P\cdot I_n$ is good orbit for $\sigma$ by Proposition \ref{distinction by closed orbit}. By Definition $\ref{Good orbit for}$, we have $\Hom_H(\mathcal{V}/ \mathcal{V}_{m-1}, \mathbb{C}) \not= 0$. Hence, $\ind_P^{G}(\sigma)$ admits a symplectic model by Lemma \ref{distinction by quotient}.
	\end{proof}

	\section{Consequences of good orbits}\label{59}
	This section begins by defining notations related to the multisets of integer segments, followed by demonstrating a connection between these multisets and the symplectic model, which is a consequence of good orbits. 
	
	\subsection{Symplectic multisets}
	A segment of integers refers to a set of the form $[a, b] = $ \linebreak$\{a, a + 1,\ldots, b\}$ for some integers $a \leq b$. The set $\Sgm_{\mathbb{Z}}$ represents all such integer segments. Define
	$$\nu[a,b] = [a+1,b+1], \hspace{.5cm} [a,b] \in \Sgm_{\mathbb{Z}}$$
	and consider the following relation on $\Sgm_{\mathbb{Z}}$. Let $[a_1,b_1]$, $[a_2,b_2]$ be two segments. Then $[a_1,b_1]$ precedes $[a_2,b_2]$ (or $[a_1,b_1] \prec [a_2,b_2]$ ) if $a_1<a_2$, $b_1 < b_2$, and $b_1 \geq a_2- 1$.
	
	We refer to a decomposition of $[a, b] \in \Sgm_{\mathbb{Z}}$ as $k$-tuple of segments $([a_1, b_1],\ldots, [a_k, b_k]) \in
	\Sgm_{\mathbb{Z}}^k$, $k \in \mathbb{N}$, such that $b_1 = b$, $a_k = a$, and $b_{i+1} = a_i-1$, $i = 1,\ldots, k-1$. When $k = 1$, the decomposition is called trivial. Let $\mathcal{O}_{\mathbb{Z}}$ denote the set of multisets of segments of integers. Then a multiset $\mathfrak{m} = \{\Delta_1,\ldots,\Delta_k\}$ is ordered in standard form if $\Delta_i \nprec \Delta_j$ for all $i < j$. Given the ordered multiset $\mathfrak{m} = \{\Delta_1,\ldots,\Delta_k\} \in \mathcal{O}_{\mathbb{Z}}$,
	a decomposition of $\mathfrak{m}$ implies a corresponding decomposition of each individual $\Delta_i$. When the decomposition of each $\Delta_i$ is trivial, the decomposition of $\mathfrak{m}$ is trivial.
	
	One can also index a decomposition of an ordered multi-set as follows. Let $(\Delta_{i,1},\ldots,\Delta_{i,k_i})$ be the decomposition of $\Delta_i$ for $i = 1,\ldots,k$. Let $(\mathfrak{I}, \prec)$ be the linearly ordered set
	$$\mathfrak{I} = \{(i, j) \colon i = 1,\ldots, k, j = 1,\ldots, k_i\}$$
	with the lexicographic order $(i_1, j_1) \prec (i_2, j_2)$ if either $i_1 < i_2$ or $i_1 = i_2$ and $j_1 < j_2$. We further define the partial order $(i_1, j_1) \ll (i_2, j_2)$ if $i_1 < i_2$. Therefore, breaking down an ordered multiset $\mathfrak{m} = \{\Delta_1,\ldots,\Delta_k\} \in \mathcal{O}_{\mathbb{Z}}$ yields a new ordered multiset    
	$\{\Delta_i \colon i \in (\mathfrak{I}, \prec) \}$.

	\begin{defi}\label{10}
		Let  $\mathfrak{m} = \{\Delta_1,\ldots,\Delta_k\} \in \mathcal{O}_{\mathbb{Z}}$ be an ordered multiset. Then an ordered decomposition $\{\Delta_{(i,j)} \colon (i,j) \in (\mathfrak{I}, \prec) \}$ of the order $\{\Delta_1,\ldots,\Delta_k\}$ of $\mathfrak{m}$ is said to be good decomposition for $\mathfrak{m}$ if there is an involution $\tau$ on $\mathfrak{I}$ that  satisfies the following properties:
		\begin{enumerate}
			\item[\upshape(1)] $\tau(i,j+1) \ll \tau(i,j),~i = 1,\ldots,k,~j = 1,\ldots,k_i-1;$
			\item[\upshape(2)] $\tau(i,j) \not= (i,j)$, $(i,j) \in \mathfrak{I}$;
			\item[\upshape(3)] $\Delta_{(i,j)} = \nu \Delta_{\tau(i,j)}$ whenever $(i,j) \prec \tau(i,j)$.
		\end{enumerate}
	\end{defi}
	\begin{defi}\label{58}
		A multiset $\mathfrak{m} \in \mathcal{O}_{\mathbb{Z}}$ is called symplectic if it admits a good decomposition for each standard order of $\mathfrak{m}$.
	\end{defi}

	 For $\mathfrak{m} = \{\Delta_1,\ldots,\Delta_k\} \in \mathcal{O}_{\mathbb{Z}}$, define $\nu \mathfrak{m} = \{\nu\Delta_1,\ldots,\nu\Delta_k\}$.
	 The symplectic multisets exhibit the following property, as established by Mitra et al., {\upshape\cite[Proposition $8.7$]{mitra2017klyachko}}.
	\begin{theorem}\label{54} Let $\mathfrak{m} \in \mathcal{O}_{\mathbb{Z}}$ be  a set. If $\mathfrak{m}$ is symplectic, then $\mathfrak{m}$ takes the form $\mathfrak{m'} + \nu \mathfrak{m'}$ for some $\mathfrak{m'} \in \mathcal{O}_{\mathbb{Z}}$. 
	\end{theorem}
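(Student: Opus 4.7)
The plan is to prove Theorem \ref{54} by strong induction on the total length $\|\mathfrak{m}\| := \sum_{\Delta \in \mathfrak{m}} l(\Delta)$. The base case $\|\mathfrak{m}\| = 0$ is immediate with $\mathfrak{m'} = \emptyset$. For the inductive step, the first task is to extract two segments $\Delta, \Delta' \in \mathfrak{m}$ with $\Delta = \nu \Delta'$; once this is done, one removes them to form a smaller multiset $\mathfrak{m}''$, verifies that $\mathfrak{m}''$ is still symplectic, and applies the induction hypothesis to obtain $\mathfrak{n}$ with $\mathfrak{m}'' = \mathfrak{n} + \nu \mathfrak{n}$, so that $\mathfrak{m'} := \mathfrak{n} + \{\Delta'\}$ satisfies the required identity.

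To extract such a paired pair of segments, fix a standard ordering $\mathfrak{m} = \{\Delta_1, \ldots, \Delta_k\}$ in which $\Delta_1$ has maximal $e(\Delta_1) = N$ and, among segments with this endpoint, maximal $b(\Delta_1)$. The symplectic hypothesis yields a good decomposition $\{\Delta_{(i,j)}\}_{(i,j) \in \mathfrak{I}}$ with involution $\tau$ satisfying Definition \ref{10}. The top piece $\Delta_{(1,1)}$ has right endpoint $N$, and condition (3) forces $\Delta_{(1,1)} = \nu \Delta_{\tau(1,1)}$ with $(1,1) \prec \tau(1,1) =: (i_0, j_0)$. The maximality of $N$, combined with condition (1) applied inside column $i_0$, rules out any earlier piece of $\Delta_{i_0}$, since it would have to pair with a fragment of right endpoint $N+1$; this forces $j_0 = 1$. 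Applying analogous reasoning iteratively along the chain $\tau(1,1), \tau(1,2), \ldots, \tau(1,k_1)$ of images of the remaining pieces of $\Delta_1$, and exploiting the tiebreaker on $b(\Delta_1)$, one argues that these images are first pieces of distinct segments whose concatenation is exactly $\nu^{-1} \Delta_1$. Consequently there exists $i^* \in \{2, \ldots, k\}$ with $\Delta_{i^*} = \nu^{-1} \Delta_1$ belonging to $\mathfrak{m}$.

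Setting $\mathfrak{m}'' = \mathfrak{m} \setminus \{\Delta_1, \Delta_{i^*}\}$ produces a multiset of strictly smaller total length. To apply induction, one checks that $\mathfrak{m}''$ inherits symplecticity: given any standard order of $\mathfrak{m}''$, reinsert $\Delta_{i^*}$ and then $\Delta_1$ at positions compatible with $\prec$; by the extremal choice of $\Delta_1$ and the relation $\Delta_{i^*} = \nu^{-1}\Delta_1$, this yields a standard order of $\mathfrak{m}$. A good decomposition for this extended order, guaranteed by symplecticity of $\mathfrak{m}$, restricts to the indices corresponding to $\mathfrak{m}''$, and the restricted involution inherits conditions (1)--(3) of Definition \ref{10} because the matching of the pieces of $\Delta_1$ and $\Delta_{i^*}$ is internal to that pair (by the analysis of the previous paragraph), so removing the block does not entangle the remaining indices. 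The induction hypothesis then produces $\mathfrak{n}$ with $\mathfrak{m}'' = \mathfrak{n} + \nu \mathfrak{n}$, and $\mathfrak{m'} = \mathfrak{n} + \{\Delta_{i^*}\}$ is the sought multiset.

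The principal obstacle is the reassembly step in paragraph two: a priori the good decomposition could fragment $\Delta_1$ into many pieces and pair them with fragments drawn from several different segments of $\mathfrak{m}$, rather than with pieces that reassemble into a single $\nu^{-1}\Delta_1 \in \mathfrak{m}$. Ruling this out is precisely where symplecticity for \emph{every} standard order of $\mathfrak{m}$ (Definition \ref{58}), not merely the chosen one, is used: by varying the standard ordering and exploiting the rigidity of conditions (1) and (3), one forces cross-segment pairings to collapse to the in-segment pairing described above. I expect this combinatorial analysis, analogous to the argument of \cite[Proposition $8.7$]{mitra2017klyachko}, to constitute the technical heart of the proof.
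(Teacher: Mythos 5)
The paper does not prove Theorem~\ref{54} at all: it is stated with the attribution ``due to Mitra et al., \cite[Proposition $8.7$]{mitra2017klyachko}'' and used as a black box in Proposition~\ref{65}. So there is no in-paper proof to compare your attempt against; the question is whether your sketch would stand on its own, and as written it does not close.

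The decisive gap is in your second paragraph, and you essentially flag it yourself in your final one. After iterating along $\tau(1,1),\tau(1,2),\ldots,\tau(1,k_1)$ you assert ``these images are first pieces of distinct segments whose concatenation is exactly $\nu^{-1}\Delta_1$'' and then conclude ``consequently there exists $i^*$ with $\Delta_{i^*}=\nu^{-1}\Delta_1$.'' This consequence does not follow from the stated intermediate claim: if the images are first pieces of \emph{distinct} segments, their concatenation being $\nu^{-1}\Delta_1$ says nothing about any single $\Delta_{i^*}$ equalling $\nu^{-1}\Delta_1$ (each $\Delta_{(i_m,1)}$ is only the initial fragment of $\Delta_{i_m}$, not all of it). To get a genuine pair $(\Delta_1,\nu^{-1}\Delta_1)$ in $\mathfrak{m}$ you must show that in fact the good decomposition can be forced to have $k_1=1$ and $\tau(1,1)=(i^*,1)$ with $k_{i^*}=1$, i.e.\ that the pairing of $\Delta_1$ collapses to a single whole segment. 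Your closing paragraph correctly identifies that ruling out cross-segment fragmentation is where the ``every standard order'' clause of Definition~\ref{58} must be used, but ``by varying the standard ordering and exploiting rigidity'' is a plan, not an argument, and it is precisely the content of the cited proposition.

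There is also a secondary issue in your third paragraph: to restrict a good decomposition of the reinserted order of $\mathfrak{m}$ to $\mathfrak{m}''$ you need the matching of the pieces of $\Delta_1$ and $\Delta_{i^*}$ to be internal for \emph{that particular} good decomposition, not merely for some good decomposition. Definition~\ref{58} only guarantees existence, not uniqueness, so you would need to either show that \emph{every} good decomposition for the reinserted order has this internality, or exhibit one that does and argue it suffices. This is again downstream of the missing combinatorial core. The overall strategy (strong induction on $\|\mathfrak{m}\|$, extremal extraction of a $\nu$-linked pair, restriction of good decompositions) is plausible and in the spirit of the reference, but the heart of the argument is exactly what is not supplied.
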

	
	\begin{rem}
		The bijection map $[a,b] \mapsto [a,b]_{\rho}$ from segments of integers to segments in supercuspidal line of $\rho$  induces a bijection between multisets from $\mathcal{O}_{\mathbb{Z}}$ to $\mathcal{O}_{\rho}$. This correspondence is termed $\rho$-labeling, with its inverse operation referred to as unlabeling.  For the rest of the article, we take the following convention. A property defined on multisets in $\mathcal{O}_{\mathbb{Z}}$ is said to be satisfied by a multiset $\mathfrak{m} \in \mathcal{O}_{\rho}$ if its unlabeling satisfies this property.
	\end{rem}

	\subsection{Connection between symplectic model and symplectic multiset}\label{62} 
	Fix $\rho \in \Cusp (G_{r})$ without symplectic model. Let $\mathfrak{m} = \{\Delta_1,\ldots,\Delta_k\} \in \mathcal{O}_{\rho}$ be a set, where $\Delta_i = [a_i,b_i]_{\rho}$, $i = 1,\ldots,k$. We now prove that if
	$\lambda(\mathfrak{m}) = Q(\Delta_1) \times\cdots \times Q(\Delta_k)$ admits a symplectic model, then the unlabeling of $\mathfrak{m}$ is symplectic in the sense of Definition \ref{58}.
	To prove this, we recall the Jacquet module of the representation $\sigma = Q(\Delta_1) \otimes\cdots \otimes Q(\Delta_k)$. 

	Let $M = M_{\alpha}$ and $L = M_{\beta}$ where $\beta$ represents a refinement of a partition $\alpha$ of $n$. Note that in the case of the ordered index set $\mathfrak{I}$ and the decomposition components, we use the notation of $\S$\ref{general orbits}. 
	Let $\sigma$ be an irreducible, essentially square-integrable representation of $M$. If Jacquet module of $\sigma$ is non-zero, then by using Jacquet module of $Q(\Delta)$ provided in $\S$$\ref{303}$ together with $(\ref{300})$, we obtain
	\begin{equation}\label{60}
		\re_{L,M}(\sigma) = \otimes_{(i,j) \in (\mathfrak{I}, \prec)}Q(\Delta_{(i,j)}),
	\end{equation}
	where $\re_{M_{\beta_i},G_{n_i}}(Q(\Delta_i)) = Q(\Delta_{i,1}) \otimes\cdots \otimes Q(\Delta_{i,k_i})$ is defined by $(\ref{102})$.
	 
	 Let $M$ and $\sigma$ be as above and let $\mathcal{I}_M(P \cdot x) = w \in  {_MW}_{\theta(M)} \cap \mathfrak{J}_0(\theta)$. For each $(i,j) \in \mathfrak{I}$,  the representation $Q(\Delta_{(i,j)})$ does not have a symplectic model by Lemma \ref{55}. Therefore, by employing Proposition \ref{distinction by closed orbit} with (\ref{60}), we derive the subsequent proposition. 
	 \begin{prop}\label{61}
	 	With the above notation, the following are equivalent:
	 	\begin{enumerate}
	 		\item[\upshape(1)] $P \cdot x$ is good orbit for $\sigma$;
	 		\item[\upshape(2)] $\tau(i,j) \not= (i,j)$ for each  $(i,j) \in \mathfrak{I}$, and $Q(\Delta_{(i,j)}) \simeq \nu Q(\Delta_{\tau{(i,j)}})$ if $(i,j) \prec \tau(i,j)$.
	 	\end{enumerate}
	 \end{prop}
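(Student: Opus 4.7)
The plan is to string together three ingredients already established in the paper: Proposition \ref{50} (which reformulates the good-orbit condition), the Jacquet-module formula (\ref{60}) (which decomposes $\re_{L,M}(\sigma)$ into a tensor product of irreducibles), and Proposition \ref{distinction by closed orbit} (which characterizes distinction of such tensor products). The final, non-trivial step is to eliminate the fixed-point case for $\tau$, using the standing hypothesis that each $\rho_i$ has no symplectic model together with Lemma \ref{55}.

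First I would apply Proposition \ref{50} to rewrite ``$P\cdot x$ is good for $\sigma$'' as the non-vanishing of $\Hom_{L_x}(\re_{L,M}(\sigma), \delta_{Q_x}\delta_Q^{-1/2})$. Substituting (\ref{60}) expresses the source as the irreducible tensor product $\otimes_{(i,j) \in (\mathfrak{I},\prec)} Q(\Delta_{(i,j)})$; if the Jacquet module vanishes, both sides of the claimed equivalence fail trivially (condition (2) would refer to undefined segments), so one may assume non-vanishing. Since each tensor factor $Q(\Delta_{(i,j)})$ is irreducible, Proposition \ref{distinction by closed orbit} applies directly and identifies the non-vanishing of this Hom-space with the simultaneous conditions $Q(\Delta_{(i,j)}) \simeq \nu Q(\Delta_{\tau(i,j)})$ whenever $(i,j) \prec \tau(i,j)$, and $Q(\Delta_{(i,j)})$ being symplectic-distinguished whenever $\tau(i,j) = (i,j)$.

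The main obstacle is then to show that the fixed-point case cannot arise, i.e.\ that $\tau(i,j) \neq (i,j)$ for every $(i,j) \in \mathfrak{I}$. Suppose for contradiction $\tau(i,j) = (i,j)$. If $\Delta_{(i,j)}$ has length at least two, Lemma \ref{55} directly shows that $Q(\Delta_{(i,j)})$ admits no symplectic model, contradicting the distinction requirement. If instead $\Delta_{(i,j)}$ has length one, then $Q(\Delta_{(i,j)})$ is a twist $\nu_{\rho_i}^a \rho_i$; since elements of $\Sp_{r_i}(\D)$ have reduced norm one, the character $\nu_{\rho_i}$ is trivial on $\Sp_{r_i}(\D)$, so $\nu_{\rho_i}^a \rho_i$ is symplectic-distinguished if and only if $\rho_i$ is, again contradicting the hypothesis. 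Consequently $\tau$ has no fixed points, and the remaining condition above is exactly condition (2) of the proposition, yielding the claimed equivalence.
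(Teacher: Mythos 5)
Your proposal is correct and follows the route the paper intends: the paper itself gives no displayed proof of Proposition~\ref{61}, only a one-sentence citation of Proposition~\ref{distinction by closed orbit} and $(\ref{60})$, preceded by an appeal to Lemma~\ref{55} for the ``no fixed points of $\tau$'' claim. You reconstruct exactly that chain---Proposition~\ref{50} to reduce ``good orbit'' to non-vanishing of the $\Hom$-space, substitution of $(\ref{60})$, then Proposition~\ref{distinction by closed orbit}---and then rule out fixed points of $\tau$.

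The one place where you supply a step the paper glosses over is the fixed-point elimination. The paper's remark ``$Q(\Delta_i)$ does not have a symplectic model by Lemma~\ref{55}'' is slightly imprecise on two counts: Lemma~\ref{55} is stated only for segments with $a<b$, and what must actually be checked is non-distinction of the \emph{decomposition pieces} $Q(\Delta_{(i,j)})$, which can well have length one even when the original $\Delta_i$ do not. You handle the length-one case directly: $Q(\Delta_{(i,j)})=\nu_{\rho}^{a}\rho$, and since $\nu$ (hence $\nu_\rho$) is trivial on $\Sp_r(\D)$ (reduced norm has absolute value one on the symplectic group), this twist is distinguished iff $\rho$ is, contradicting the standing hypothesis. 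That added case analysis is exactly the detail the paper omits, and the rest of your argument matches the paper's.
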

     It leads to the following result.
\begin{prop}\label{65}
	Let $\mathfrak{m}  = \{\Delta_1,\ldots,\Delta_t\}  \in \mathcal{O}_{\rho}$ be a set. If $\lambda(\mathfrak{m})$ admits a symplectic model, then $\mathfrak{m} = \mathfrak{m'} + \nu \mathfrak{m'}$ for some $\mathfrak{m'} \in \mathcal{O}_{\rho}$. In particular, if $Q(\mathfrak{m})$ admits a symplectic model, then $\mathfrak{m} = \mathfrak{m'} + \nu \mathfrak{m'}$ for some $\mathfrak{m'} \in \mathcal{O}_{\rho}$.
\end{prop}
\begin{proof}
	Since $\lambda(\mathfrak{m})$ has a symplectic model, the representation $\lambda(\mathfrak{m}) = Q(\Delta_1) \times \cdots \times Q(\Delta_t)$ also has a symplectic model for any standard order on $\mathfrak{m} = \{\Delta_1,\ldots,\Delta_t\}$. As a result, Lemma \ref{47} provides a good orbit for $Q(\Delta_1) \otimes \cdots \otimes Q(\Delta_t)$. It is easy to observe that Proposition \ref{61} with (\ref{104}) implies the symplectic nature of  $\mathfrak{m}$ in the sense of Definition \ref{58}. Thus, $\mathfrak{m}$ takes the form $\mathfrak{m'} + \nu \mathfrak{m'}$ for some $\mathfrak{m'} \in \mathcal{O}_{\rho}$, by Theorem $\ref{54}$.  
\end{proof}

	\section{Distinguished ladder and unitary representations}
	In this section, Theorem \ref{1} is established, presenting a family of ladder representations with a symplectic model. Utilizing the hereditary property of the symplectic model together with Theorem \ref{1}, we prove Theorem \ref{4}, which provides a family of irreducible unitary representations equipped with a symplectic model.
	
 \subsection{The representation $Q(\nu \Delta , \Delta)$} We begin with an example of ladder representation with a symplectic model.
 \begin{lemma}\label{41}
 	Let $\rho \in \Cusp$, without a symplectic model. Let $\Delta = [a, b]_{\rho}$ be a segment such that $a < b$. Then $Q(\nu \Delta , \Delta)$ admits a symplectic model.
 \end{lemma}
\begin{proof} 
	Since $Q(\nu \Delta , \Delta)$ sits in the following exact sequence
	$$0 \rightarrow  Q([a+1, b]_{\rho}) \times Q([a, b+1]_{\rho})  \rightarrow \pi = \nu Q(\Delta) \times Q(\Delta) \rightarrow Q(\nu \Delta , \Delta) \rightarrow 0,$$ 
	it follows from Theorem \ref{49} that $\pi$ has a symplectic model. Using Mackey theory with Lemma \ref{55}, we conclude that $Q([a+1, b]_{\rho}) \times Q([a, b+1]_{\rho})$ does not have a symplectic model. Therefore, $Q(\nu \Delta , \Delta)$ has a symplectic model by Lemma \ref{subquotient}. 
\end{proof}

	\subsection{Projection of the representation $\lambda(\mathfrak{m})$}\label{66}
	For the segments $\Delta_i = [a_i,b_i]_{\rho}$ in $\Cusp$, where $1 \leq i \leq t$, let $\mathfrak{m} = \{\Delta_1,\ldots,\Delta_t\} \in \mathcal{O}_{\rho}$ be a ladder. For each $1 \leq i \leq t-1$, define 
	$$\mathcal{P}_i = Q(\Delta_1) \times \cdots \times Q(\Delta_{i-1}) \times Q(\Delta_i^*) \times Q(\Delta_{i+1}^*) \times Q(\Delta_{i+2}) \times\cdots \times Q(\Delta_t),$$
	where $\Delta_i^* = [a_{i+1},b_i]_{\rho}$ and $\Delta_{i+1}^* = [a_i,b_{i+1}]_{\rho}$
	(with $\mathcal{P}_i = 0$ whenever $a_i > b_{i+1} + 1$). Then we have the following by \cite[Theorem $1$]{lapid2014determinantal}.
	\begin{lemma}\label{67}
		With the above notation,
		let $\mathcal{P}$ be the kernel of the projection $\lambda(\mathfrak{m}) \rightarrow Q(\mathfrak{m})$.
		 Then $\mathcal{P} = \sum_{i=1}^{t-1}\mathcal{P}_i$.
	\end{lemma}
  \subsection{Proof of Theorem \ref{1}}
  \begin{proof}
        It is easy to see that the condition $(2)$ and $(3)$ are equivalent. If $Q(\mathfrak{m})$ has a symplectic model, then Proposition \ref{65} implies that $\mathfrak{m} = \mathfrak{m'} + \nu \mathfrak{m'}$ for some $\mathfrak{m'} \in \mathcal{O}_{\rho}$. 
        
        Let $t=2k$ be a even number, and for every $1 \leq i \leq k$, let 
        $\Delta_{2i-1} = \nu \Delta_{2i}$, $\mu_i = Q(\Delta_{2i-1},\Delta_{2i})$, and $\mu = \mu_1 \times \cdots \times \mu_k.$
        Since each $\mu_i$ has a symplectic model by Lemma \ref{41},  Theorem \ref{48} implies that $\mu$ has a symplectic model. Note that $\mu$ appears as a quotient of $\lambda(\mathfrak{m})$. Hence, $\lambda(\mathfrak{m})$ also has a symplectic model by Lemma \ref{distinction by quotient}.
        
        Using the notation from $\S$\ref{66}, we proceed to establish the existence of a symplectic model for $Q(\mathfrak{m})$. This can be achieved by demonstrating that $\mathcal{P}$ lacks a symplectic model, which, according to Lemma \ref{67}, is equivalent to proving the absence of a symplectic model in $\mathcal{P}_i$ for each $1 \leq i \leq 2k-1$.
		Since the multiset 
		$\mathfrak{m}_i = \{\Delta_1,\ldots,\Delta_{i-1},\Delta_{i}^*,\Delta_{i+1}^*,\Delta_{i+2},\ldots,\Delta_t\} \in \mathcal{O}_{\rho}$
		 is arranged in strictly decreasing end points for all $1 \leq i \leq 2k-1$, it is a set in standard form. Hence,
		 $\mathcal{P}_i$ is isomorphic to $\lambda(\mathfrak{m}_i)$. For any $1 \leq i \leq 2k-1$, it suffices to prove by Proposition \ref{65} that $\mathfrak{m}_i$ is not of Speh type.
		 
		 By contradiction, let us assume that a multiset $\mathfrak{m}_i$ of Speh type exists for some $i$.  
		 Let $\Delta_j' = \Delta_j$ for $j \not= i,i+1$, $\Delta_i' = \Delta_{i}^*$ and $\Delta_{i+1}' = \Delta_{i+1}^*$. 
		 Since the multiset 
		 $\mathfrak{m}_i = \{\Delta_1',\ldots,\Delta_{2k}'\}$
		 is arranged in strictly decreasing end points, we have $\Delta_{2j-1}' = \nu \Delta_{2j}'$ for all 
		 $1 \leq j \leq k$. If $i$ is odd, then the inequality $a_i > a_{i+1}$ is contradicted by $a_{i+1} = a_i + 1$. If $i$ is even, then the fact $a_{i-1} > a_i > a_{i+1}$ are integers is contradicted by $a_{i-1} = a_{i+1} + 1$. Hence, the theorem follows. 
		\end{proof}

	\subsection{Proof of Theorem $\ref{4}$}\label{68} 
	\begin{proof}
	Let $\rho \in \Cusp$, which lacks a symplectic model, and let $\Delta = [a, b]_{\rho}$ represent a segment in $\Sgm$. Assume that $\delta = Q(\Delta)$ is an essentially square-integrable representation associated with $\rho$. Let $\Lambda_{s} = ((s-1)/2,(s-3)/2,\ldots,-(s-1)/2)$, for $s \in \mathbb{N}$. Set 
	$$\mathbf{I}(\delta, \Lambda_{s}) = \nu_{\delta}^{\frac{(s-1)}{2}}\delta \times \nu_{\delta}^{\frac{(s-3)}{2}}\delta \times \cdots \times \nu_{\delta}^{\frac{-(s-1)}{2}}\delta,$$
	   and express $\mathbf{I}(\delta, \Lambda_{s})$ as
		$Q([a_1,b_1]_{\rho}) \times Q([a_2, b_2]_{\rho}) \times \cdots \times Q([a_s,b_s]_{\rho}),$
		where
		\begin{align*}
			\begin{cases}
				a_1 = a+(s-1)/2, ~b_1 = b+(s-1)/2,\\
				a_{i+1} = a_i-1,~b_{i+1} = b_i-1,~ 1 \leq i \leq s-1,~\text{and}\\
				a_i + b_{s+1-i} = a+b,~ 1 \leq i \leq s.
			\end{cases}
		\end{align*}
		 Consequently, by Lemma $\ref{67}$, we obtain the subsequent exact sequence:
		$$0 \rightarrow \mathcal{P} \rightarrow \mathbf{I}(\delta, \Lambda{s}) \rightarrow \Sp(\delta,s) \rightarrow 0,$$
		where $\mathcal{P} = \sum_{i=1}^{t-1}\mathcal{P}i$ is explicitly defined in $\S$\ref{66}.
		From the above expression for $\mathbf{I}(\delta, \Lambda{s})$, it is evident that $\Sp(\delta,s)$ represents a ladder representation for odd $s$. Theorem \ref{1} indicates that $\Sp(\delta,s)$ lacks a symplectic model in this case. However, for even $s$, Theorem \ref{1} shows that $\nu_{\delta}^{1/2} \Sp(\delta,s)$ has a symplectic model, which means that $\Sp(\delta,s)$ itself must also have a symplectic model.
		
		As a result, each $\rho_i$ has a symplectic model for $1 \leq i \leq u$, and by Theorem \ref{48}, each $\lambda_i$ also has a symplectic model for $1 \leq i \leq v$. Moreover, for every $1 \leq i \leq y$ $\sigma_i$ have a symplectic model according to  \cite[Theorem 1.1]{sharma2023symplectic}. As $\tau_i$ has a symplectic model for each $1 \leq i \leq x$, it follows by Theorem \ref{48} that $\pi$ has a symplectic model.
	\end{proof}
	\begin{center}
		\large {\textbf{Acknowledgment}}
	\end{center}
	We thank Dipendra Prasad for his insightful comments in Theorem $\ref{4}$ and his suggestions on modular characters and the Steinberg representations. 
	The first author acknowledges the research support from the ``Council for Scientific and Industrial Research (Government of India)" (File No. 09/143(1005)/2019-EMR-I). The second author is partially supported by the SERB, MTR/2021/000655.
	\bibliography{SMFLUR}
	\bibliographystyle{alpha}
\end{document}